\newtheorem{tw}{Theorem}
\newtheorem{lem}[]{Lemma}
\newtheorem{fa}[]{Fact}
\newtheorem{example}{Example}
\newtheorem{defi}{Definition}
\newfont{\bs}{cmbxsl10 scaled 1200}
\begin{document}
\doublespacing

%\nocite{*}
\title{Asymptotic freeness of Jucys-Murphy elements}

\author{Lech Jankowski}
%\author{Piotr Śniady}
\email{Lech.Jankowski@math.uni.wroc.pl}
\thanks{Research was supported by the Polish Ministry of Higher Education research grant N N201 364436 for
the years 2009–2012.}

%\thanks{Research supported by Polish Government funds for science, grant N N201

%364436 for the years 2009–2012 .}
\address{Instytut Matematyczny, Uniwersytet Wrocławski, Pl. Grunwaldzki 2/4, 50-384 Wrocław, Poland}

\begin{abstract}
We give a natural proof of the appearance of free convolution of transition measures in outer product of representations of symmetric groups
by showing that some special Jucys-Murphy elements are asymptotically free.
\end{abstract}
\maketitle

\section*{Introduction}

Almost everything seems to be known about representations of the symmetric groups $S_n$. The answers for basic questions are encoded in the combinatorial structure of  Young diagrams, there is  for example Murnaghan-Nakayama formula for computing characters.
%they are described via Young Diagrams and answers for all questions are there are combinatorial rules for computing their characters.
In asymptotic representation theory, as $n$ becomes large, this combinatorial description becomes very complicated and inefficient for concrete calculations so one needs some new methods.

One way of dealing with that is to replace a Young diagram with its \emph{transition measure} (introduced by Kerov [Ker99, Ker03]). 
It is related to the shape of a Young diagram.
The theory is then more analytical and it becomes much easier, for example, to describe the asymptotics of characters.

In 1986 Voiculescu discovered a new type of independence, called \emph{free independence} or just \emph{freeness}. Just like in the case of classical independence of random variables, Voiculescu's freeness leads to a special type of convolution of probability measures on the real line, called \emph{free convolution}.
It turnes out that it can be found in real life situations, for example in random matrix theory.[Spe93]

What is important for this article is that the asymptotic representation theory described in terms of the transition measure is related to Voiculescu's \emph{free probability theory}.

%One can associate to a Young Diagram $\lambda$ a measure $\mu_{\lambda}$ called Kerov transition measure which uniquely determines $\lambda$ and is a 
%convinient tool as it translates combinatorial formulas into an analytical language making them more useful.
%If we describe representations via transition measures, it turnes out that 
%the theory is related to the free probability theory of Voiculescu.
This was first described in [Bia95] in a special case of the left regular representation of $S_n$.
Then in [Bia98] Biane discovered more connections between asymptotic representation theory and free probability, e.g.
he proved that the typical irreducible component of outer product of two irreducible representations of $S_n$ can be 
asymptotically described by the free convolution of their transition measures.
It is an important result as computing outer product of representations is a difficult matter related to Littlewood-Richardson coefficients.
%transition measure of almost all isotypic components of outer product of two representations 
%of the symmetric groups is close to the free convolution of transition measures of the factors.
Biane's proof was quite difficult and somehow unnatural, as there was no freely independent random variables.
%It is though not clear what is the nature of this relation and where it comes from.
In this paper we will show that the reason for appearance of free convolution is that some elements of group algebra $\mathbb{C}(S_n)$ called \emph{Jucys-Murphy elements} are asymptotically freely independent.

%In 1998 Biane proved that the typical irreducible component of outer product of two irreducible representations of $S_n$ can be asymptotically described by the free convolution of their transition measures.
%But it was not clear from the proof where it came from as there was nothing about freely independent random variables.

\section*{Preliminaries}

\subsection{Partitions.}
A {\it partition} of $A=\{1,\dots,m\}$ is any collection \\
$\pi = \{ B_1, \dots, B_k \}$ of pairwise disjoint, nonempty subsets of $A$
such that \\
 $B_1 \cup \dots\cup B_k = A$.
We call the elements of $\pi$ {\it blocks} and denote their number by $|\pi|$.
If every block has two elements we call $\pi$ a {\it pair partition.}
Assume we have a partition $\pi=\{(1,5,6,8), (2,4), (3), (7,9)\}$.
We can draw it in the following way:\\
%\vskip0.0cm
%\setlength{\unitlength}{0.3cm}
%\[ \pi = \{ (1,5,6,8), (2,4), (3), (7,9) \}
 %  \qquad \entspricht \qquad
 %  \begin{picture}(6,2)\thicklines
 %  \put(0,0){\line(0,1){2}}
 %  \put(0,0){\line(1,0){6}}
 %  \put(3,0){\line(0,1){2}} 
 %  \put(4,0){\line(0,1){2}}
 %  \put(6,0){\line(0,1){2}}
 %  \put(1,1){\line(0,1){1}}
 %  \put(2,1){\line(0,1){1}}
 %  \put(1,1){\line(1,0){1}}
 %  \put(5,1){\line(0,1){1}}
 %  \put(-0.3,2.2){1}
 %  \put(0.7,2.2){2}
 %  \put(1.7,2.2){3}
 %  \put(2.7,2.2){4}
 %  \put(3.7,2.2){5}
 %  \put(4.7,2.2){6}
 %  \put(5.7,2.2){7}
 %  \end{picture} \: .
%\]

\includegraphics{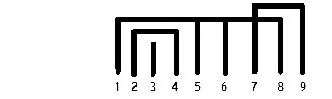}

A partition $\pi$ is said to be \emph{non-crossing} if there exist no \\
$1 \leq k_1 < l_1 < k_2 <l_2 \leq m$ such that $\{k_1,k_2\}, \{l_1,l_2\}$ are contained in two different blocks of $\pi$.
This condition just means that when we draw $\pi$ in the above-described way, the lines do not cross.

A block $B$ of $\pi$ is an {\it inner block} of a block $B'$ if there exist two numbers in $B'$, one smaller than any number in $B$ and one bigger than any number in $B$. We will distinguish between {\it direct} and {\it indirect} inner blocks.
A block $B$ is a {\it direct} inner block of a block $B''$ if there is no block $B'$ with a property that $B$ is an inner block of $B'$ and $B'$ is an inner block of $B''$.
For example, block $\{2,4\}$ is a direct inner blocks of $\{1,5,6,8\}$ because there are no blocks in between whilst block $\{3\}$ is an
\emph{indirect} inner block of $\{1,5,6,8\}$ because there is $\{2,4\}$ in between.

In this article we will use the following notations:
\begin{enumerate}
\item $P(m)$ is the set of all partitions of an $m$-element set.
\item $NC(m)$ is the set of all non-crossing partitions.
\item $NC_{1,2}(m)$ is the set of those non-crossing partitions whose every block
							has cardinality one or two.
\item $NC_{1 < 2}(m)$ is the subset of $NC_{1,2}(m)$ of those partitions
							whose every one-element block is an inner block of some 
							two-element block.
\item $NC_{\geq 2}(m)$ is the set of all non-crossing partitions of an \\
							$m$-element set having blocks of cardinality at least two.
\end{enumerate} 

Let us define a function $F:NC_{1 < 2}(m)\rightarrow NC_{\geq 2}(m)$ by the following procedure:
for every two-element block $B$ of a partition $\pi\in NC_{1 < 2}(m)$ take all its direct inner one-element blocks and 
sum them together with~$B$. This procedure gives us a new non-crossing partition $F(\pi)$ which does not have any one-element blocks.
It is clear that $F$ is a bijection as there is a procedure to get $\pi$ back from $F(\pi)$:
for every block $B$ which has at least three elements take all the elements except the biggest and the smallest one and
put them separately to new blocks.

The following picture shows an example of how the function $F$ works.
\includegraphics{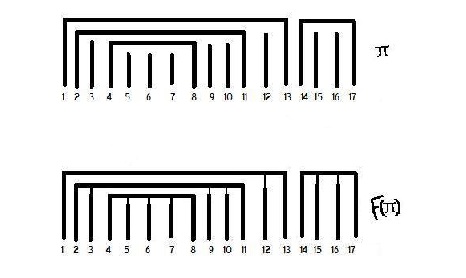}

We can now write
\[
NC_{\geq 2}(m) \xleftrightarrow{F}  NC_{1 < 2}(m) \subset NC_{1,2}(m) \subset NC(m) \subset P(m).
\]

Let $p=(A_1,\dots A_m)$ be a tuple of objects of any kind, some of them may appear many times in the tuple.
We can think of $(A_1,\dots A_m)$ as a \emph{coloring} of a set $\{1,2,\dots,m\}$, namely we think of $A_i$ as the color of the number $i$.
 
We say that a partition 
$\pi$ respects coloring $p$ if no block of $\pi$ contains two numbers with different colors. 
Let $NC^{(p)}(m)$ denote the set of all non-crossing partitions respecting the coloring $p$.
If $\pi \in NC^{(p)}(m)$, then $p$ induces the coloring of blocks of $\pi$.
In this paper the coloring will have only two colors: $X$ and $Y$. %and we will split $\pi$ in two parts $\pi=\pi_X \cup \pi_Y$.(WILL WE?)

\subsection{Representation theory}

A {\it group representation} is any homomorphism $\rho$ from a group $G$ to the automorphism group of some vector space $V$
\[
\rho: G\rightarrow \operatorname{Aut}(V)
\]

A {\it character} of a representation $\rho$ is a function $\chi_{\rho}:G \rightarrow \mathbb{C}$ given by
\[
\chi_{\rho}(g)=\operatorname{tr} \rho(g)
\]
where $\operatorname{tr}$ denotes trace of a matrix divided by its dimension, i.e. $$\operatorname{tr}(\rho(g))=\frac{\operatorname{Tr}(\rho(g))}{\operatorname{Tr}(\rho(e))}.$$

A representation is called irreducible if it is not a direct sum of other representations [Ser77].

Two representations $\rho$ and $\rho '$ are equivalent if there are bases in corresponding vector spaces
$V_{\rho}$ and $V_{\rho'}$ such that the matrices $\rho(g)$ and $\rho'(g)$ are equal for all $g\in G$.
It turns out that for a finite group $G$ the maximal number of pairwise inequivalent irreducible representations is finite and
for symmetric groups irreducible representations can be enumerated by objects called \emph{Young diagrams}.
More information about representation theory can be found in [Ser77].

A Young diagram with $n$ boxes can be defined as a descending sequence of nonnegative integers summing up to $n$ or as
a geometric object, namely a finite collection of boxes, or cells, arranged in left-justified rows
with the non-increasing row lengths when we move upwards.
The equivalence is easy to see if we interpret the numbers as lengths of rows. 

\includegraphics{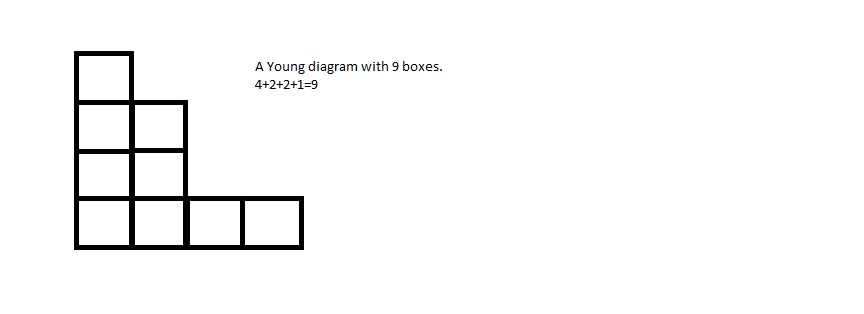}

In asymptotic representation theory the size of the diagram tends to infinity. In the grate majority
of results in this field we consider only so-called \emph{$C$-balanced} diagrams.
Let $C$ be a positive real number.
A Young diagram $(y_1,\dots,y_j)$ with $y_1+\cdots+y_j=n$ is said to be {\it balanced} or, more precisely, {\it C-balanced} if 
$y_i \leq C\sqrt{n}$ for all $1\leq i \leq j$ and $j \leq C\sqrt{n}$.
Sometimes the assumption becomes very precise and we require that the sequence of Young diagrams (rescaled by $\sqrt{n}$ to avoid
growth of the size) tend to some prescribed shape.
In this paper we will not use Young diagrams but we will refer to some dual objects, namely their transition measures.

Asymptotic behaviour of characters of symmetric groups can be described by the following theorem by Biane ([Bia98],[Bia01a])
\begin{fa}
Let $\lambda_n$ be a sequence of $C$-balanced Young diagrams and $\rho_n$ the corresponding representations of $S_n$. Fix a permutation $\sigma\in S_k$ and note
that $\sigma$ can be treated as an element of $S_n$ if we add $n-k$ additional fixpoints. There exists a constant $K$ such that
\[
\big{|}  tr(\rho(\sigma))  \big{|} \leq  K n^{\frac{-|\sigma|}{2}}.
\]
\end{fa}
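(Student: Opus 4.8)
The plan is to pass from the normalized character to the free cumulants of the transition measure, where the $C$-balanced hypothesis turns into a clean scaling estimate. Write $\mu_{\lambda_n}$ for the transition measure of $\lambda_n$ and let $R_j$ denote its $j$-th free cumulant. The starting point is the Kerov--Biane correspondence between normalized characters and free cumulants: for a single $\ell$-cycle $\sigma$,
\[
n^{\downarrow \ell}\, tr(\rho_n(\sigma)) = \Sigma_\ell(\lambda_n) = R_{\ell+1}(\mu_{\lambda_n}) + (\text{terms of strictly lower degree in the cumulant gradation}),
\]
where $n^{\downarrow \ell} = n(n-1)\cdots(n-\ell+1)$ and the degree of a monomial $R_{j_1}\cdots R_{j_s}$ is $j_1+\cdots+j_s$. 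More generally, if $\sigma$ has nontrivial cycles of lengths $\mu_1,\dots,\mu_r$ and support of size $k=\mu_1+\cdots+\mu_r$ (fixed points contribute nothing, so we may discard them), the approximate factorization property gives
\[
n^{\downarrow k}\, tr(\rho_n(\sigma)) = \prod_{i=1}^r R_{\mu_i+1}(\mu_{\lambda_n}) + (\text{strictly lower degree}).
\]
The leading monomial here has degree $\sum_i(\mu_i+1)=k+r$, while every correction term has strictly smaller degree.

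The engine of the estimate is the behaviour of the free cumulants under the balanced hypothesis. Since each row and column of $\lambda_n$ has length at most $C\sqrt n$, the probability measure $\mu_{\lambda_n}$ is supported in an interval $[-2C\sqrt n,\,2C\sqrt n]$. Pushing it forward under $x\mapsto x/\sqrt n$ produces a probability measure $\tilde\mu_n$ supported in the fixed compact interval $[-2C,2C]$, so all of its moments, hence all of its free cumulants $R_j(\tilde\mu_n)$, are bounded by a constant depending only on $C$ and $j$. Because the $j$-th free cumulant is homogeneous of degree $j$ under dilation, $R_j(\mu_{\lambda_n}) = n^{j/2}\,R_j(\tilde\mu_n) = O(n^{j/2})$, uniformly in $n$.

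Combining the two ingredients, for $\sigma$ of support size $k$ with $r$ nontrivial cycles I would estimate
\[
\big| n^{\downarrow k}\, tr(\rho_n(\sigma)) \big| \leq \Big| \prod_{i=1}^r R_{\mu_i+1}(\mu_{\lambda_n}) \Big| + O\big(n^{(k+r-1)/2}\big) = O\big(n^{(k+r)/2}\big),
\]
since the product contributes $\prod_i O(n^{(\mu_i+1)/2}) = O(n^{(k+r)/2})$ and every correction term is of strictly smaller degree, hence a lower power of $n$. Dividing by $n^{\downarrow k}=\Theta(n^{k})$ gives $|tr(\rho_n(\sigma))| = O(n^{(k+r)/2 - k}) = O(n^{-(k-r)/2})$. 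Finally $|\sigma| = \sum_i(\mu_i-1) = k-r$, so the bound reads $O(n^{-|\sigma|/2})$, and choosing $K$ to absorb the implied constants (which depend only on $C$ and on the fixed cycle type of $\sigma$) yields the claim.

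The routine parts are the scaling of the free cumulants and the final bookkeeping of powers of $n$; the genuine obstacle is the very first identity. What must be established is a gradation on the centre of $\mathbb{C}(S_n)$ for which the normalized character factorizes, to leading order, as $\prod_i R_{\mu_i+1}$ with all remaining contributions of strictly lower degree. This is precisely the approximate-factorization and polynomiality content of the Kerov--Biane theory. It can be set up through the Jucys--Murphy elements, whose symmetric functions act on $\rho_n$ as the corresponding symmetric functions of the contents of $\lambda_n$; for the upper bound alone one does not need the exact leading coefficients, only the filtration guaranteeing that the discarded terms have strictly smaller degree, but controlling those lower-order terms is where the real work lies.
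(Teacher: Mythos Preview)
The paper does not prove this statement at all: it is stated as a \emph{Fact} with references to Biane's work [Bia98, Bia01a], and is used later as a black box when estimating the contribution of each partition $\pi$ in equation~(\ref{eq:wz}). So there is no proof in the paper to compare your proposal against.

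That said, your sketch is essentially the argument that those cited papers carry out. The two genuine ingredients are exactly the ones you isolate: (i) the Kerov--Biane polynomial expansion of $\Sigma_\mu$ in free cumulants, together with the gradation ensuring that all correction terms have strictly lower weight than $\prod_i R_{\mu_i+1}$; and (ii) the observation that $C$-balancedness confines the support of $\mu_{\lambda_n}$ to an interval of width $O(\sqrt n)$, so that $R_j(\mu_{\lambda_n})=O(n^{j/2})$ by homogeneity. Your power-counting from there is correct and yields $O(n^{-(k-r)/2})=O(n^{-|\sigma|/2})$.

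You are also right to flag that the real work hides in (i). For a single cycle the Kerov polynomial $\Sigma_\ell=R_{\ell+1}+\cdots$ already requires a nontrivial argument, and for several cycles one needs the approximate-factorization statement $\Sigma_{\mu_1,\dots,\mu_r}=\prod_i\Sigma_{\mu_i}+(\text{lower weight})$, which is precisely the content of [Bia01a]. Your proposal does not supply these proofs but correctly identifies them as the load-bearing step; for the purposes of this paper, which merely quotes the bound, that is the honest place to stop.
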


\subsection{Free probability theory}

We will work in the frame of a non commutative probability space [Voi86] i.e.~a unital algebra $\mathcal{A}$ over $\mathbb{C}$ and a functional $\varphi$, unital in the sense that $\varphi(1)=1$.
We call $a\in \mathcal{A}$ a {\it non-commutative random variable} and $\varphi(a^n)$ its 
{\it moments}. If there are many non-commutative random variables $a_1,\dots,a_k$, their {\it mixed moments} are values of $\varphi$ on words in $a_1,\dots,a_k$.

The parallelism between the classical and the non-commutative probability is easy to see when we let  $\mathcal{A}$ be a commutative algebra of all random variables having all moments and $\varphi$ the classical expectation functional. Most of the classical results like the law of large numbers or central limit theorem can be translated into this language.

In many situations it turnes out that even if $a\in \mathcal{A}$ is not a genuine random variable, the sequence $\varphi(a^n)$ of its moments is a sequence of moments of a probability measure $\mu_a$ on a real line.
We call this measure the distribution of $a$.

One of the fundamental tools in both classical and free probability theory are \emph{classical} and \emph{free cumulants} [NS99].
\begin{defi}
Assume we have a tuple $(A_1,\dots,A_m)$ of non-commutative random variables.
Free cumulants is a family of functions $C_{\pi}(A_1,A_2,\dots,A_m)$ satisfying the following conditions:
\begin{enumerate}
\item $C_{\pi}(A_1,A_2,\dots,A_m)$ factorize according to the block structure~of~$\pi$, i.e.~$C_{\pi}(A_1,A_2,\dots,A_m)=\prod_{b|\pi}C_{b}(A_1,A_2,\dots,A_m)$,
\item $C_{b}(A_1,A_2,\dots,A_m)$ depends only on arguments with indexes from $b$ so if $b=\{i_1,\dots,i_l\}$ then
			$C_{b}(A_1,A_2,\dots,A_m)=C_{l}(A_{i_1},\dots,A_{i_l})$,
\item $C_{l}(A_{i_1},\dots,A_{i_l})$ are multilinear functionals,
\item $C_{\pi}$ satisfy the {\it free moment-cumulant formula}:\\
			$\varphi(A_1,A_2,\dots,A_m)=\sum_{\pi\in NC(m)}C_{\pi}(A_1,A_2,\dots,A_m)$.
\end{enumerate}

%satisfying the classical moment-cumulant formula, i.e.
%\[
%\varphi(A_1,A_2,\dots,A_m)=\sum_{\pi\in P(m)}K_{\pi}(A_1,A_2,\dots,A_m) \text{ where }
%\]
%$K_{\pi}(A_1,A_2,\dots,A_m)=\prod_{b|\pi}K_{b}(A_1,A_2,\dots,A_m)$ and $K_{b}(A_1,A_2,\dots,A_m)$ depends only on arguments with indexes from $b$:
% $K_{b}(A_1,A_2,\dots,A_m)=K_{l}(A_{i_1},\dots,A_{i_l})$ and  $b=\{i_1,\dots,i_l\}$.
\end{defi}
%It follows from the definition that $K_{l}(A_{i_1},\dots,A_{i_l})$ are multilinear functionals.

To get the definition of classical cumulants and the {\it classical moment-cumulant formula} we only have to replace the set $NC(m)$ of non-crossing partitions in condition $(4)$ by 
the set $P(m)$ of all partitions. %We will denote free cumulants by $C_{\pi}(A_1,A_2,\dots,A_m)$ to distinguish them from the classical ones.

If $A=A_1=A_2=\cdots =A_m$ and the distribution of $A$ is some measure $\mu$, then we call $C_m(A,A,\dots,A)$ the $m$-th free cumulant of
$A$ or $m$-th free cumulant of $\mu$ and we write $C_m(A)$ or $C^{\mu}_m$.

%\begin{defi}
%We say that non-commutative random variables $X_1,\dots,X_k$ are independent in the classical way if they commute and the mixed classical cumulants vanish, i.e. $K_{l}(X_{i_1},\dots,X_{i_l})=0$ whenever at least two $i_j$ are not equal.
%\end{defi}
\begin{defi}
We say that non-commutative random variables $X_1,\dots,X_k$ are independent in the free way (or simply that they are free) if their mixed free cumulants vanish, i.e. $C_{l}(X_{i_1},\dots,X_{i_l})=0$ whenever at least two $i_j$ are not equal.
\end{defi}

The following definition of freeness is clearly equivalent to the one above and will be more convenient for our purpose:

\begin{defi}
We say that $X_1,\dots,X_k$ are free if for every tuple $(A_1,\dots,A_m)$ of these variables we have
\[
\varphi(A_1,A_2,\dots,A_m)=\sum_{\pi \in NC^{(p)}(m) }C_{\pi}(A_1,A_2,\dots,A_m) ,
\]
where $p$ denotes the coloring of the set $\{1,2,\dots,m\}$ given by $p(i)=A_i$ and
$NC^{(p)}(m)$ is the set of those non-crossing partitions of $\{1,2,\dots,m\}$ which respect
the coloring $p$.
\end{defi}

To get the definition of asymptotic freeness we have to replace equality by a limit:

\begin{defi}  \label{defi:asymptfree}
Let $(\mathcal{A}_n,\varphi_n)$ be a sequence of non-commutative probability spaces and for every $n$ let $X_1^n,\dots,X_k^n \in \mathcal{A}_n$ be a tuple of non-commutative random 
variables. The sequences $(X_1^n),\dots,(X_k^n)$
are said to be asymptotically free if 
\[
\varphi_n(A^n_1,A^n_2,\dots,A^n_m) \rightarrow \sum_{\pi \in NC^{(p)}(m)}C_{\pi}(A_1,A_2,\dots,A_m) 
\]
for every tuple $(A^n_1,\dots,A^n_m)$ of sequences $X_1^n,\dots,X_k^n$. % where $A_i$ is the limit of $X^n_i$ in distribution.
\end{defi}

\begin{tw}[Voi86]
If $X,Y$ are free random variables with respective distributions $\mu_X$ and $\mu_Y$, the distribution of their sum depends only on  $\mu_X$ and $\mu_Y$ and is called a free convolution of measures  $\mu_X$ and $\mu_Y$. We denote it by
$\mu_X \boxplus \mu_Y$.
\end{tw}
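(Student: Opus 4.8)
My plan is to reduce the statement to a single clean fact: that free cumulants are additive under free convolution. The distribution of a random variable is recovered from its sequence of moments, and by the free moment-cumulant formula (condition $(4)$ in the definition of free cumulants) the moments are in turn determined by the free cumulants. Hence, to show that the distribution of $X+Y$ depends only on $\mu_X$ and $\mu_Y$, it suffices to show that the free cumulants $C_m(X+Y)$ are determined by $\mu_X$ and $\mu_Y$ alone. The naming of this distribution as $\mu_X\boxplus\mu_Y$ is then a definition.

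The computation of $C_m(X+Y)$ is direct. Using multilinearity of the cumulant functionals (condition $(3)$) I would expand
\[
C_m(X+Y)=C_m(\underbrace{X+Y,\dots,X+Y}_{m})=\sum_{Z_1,\dots,Z_m\in\{X,Y\}}C_m(Z_1,\dots,Z_m),
\]
a sum of $2^m$ terms, each a cumulant whose arguments form some mixture of copies of $X$ and $Y$. By the definition of freeness, every mixed cumulant — one in which both $X$ and $Y$ genuinely occur among the arguments — vanishes. Only the two pure terms survive, so
\[
C_m(X+Y)=C_m(X,\dots,X)+C_m(Y,\dots,Y)=C^{\mu_X}_m+C^{\mu_Y}_m .
\]
Thus the free cumulants of the sum are the sums of the corresponding free cumulants, and in particular depend only on $\mu_X$ and $\mu_Y$.

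Feeding these cumulants back into the moment-cumulant formula, each moment $\varphi((X+Y)^m)=\sum_{\pi\in NC(m)}C_\pi(X+Y,\dots,X+Y)$ becomes a fixed universal polynomial in the numbers $C^{\mu_X}_k$ and $C^{\mu_Y}_k$. Consequently the entire moment sequence of $X+Y$ is a function of $\mu_X$ and $\mu_Y$ alone, which is precisely the assertion that the distribution of the sum depends only on the two individual distributions.

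The one genuinely delicate point — the main obstacle — is passing from the moment sequence back to an honest probability measure, which requires the moment problem for $X+Y$ to be determinate. For bounded self-adjoint variables (e.g.\ compactly supported distributions) this is automatic, and for the transition measures used later in this paper the supports are uniformly controlled, so the issue does not arise. Everything else follows immediately from the vanishing of mixed cumulants; this is exactly why the cumulant formulation of freeness makes the additivity, and hence the well-definedness of the free convolution, essentially a formality.
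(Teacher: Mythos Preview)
The paper does not actually prove this theorem; it is stated as a quoted result with attribution to [Voi86], and no argument is given. So there is nothing in the paper to compare your proof against.

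That said, your argument is correct and is precisely the modern cumulant proof: expand $C_m(X+Y)$ by multilinearity, kill the mixed terms using Definition~2, and conclude $C_m(X+Y)=C_m^{\mu_X}+C_m^{\mu_Y}$, which determines all moments of $X+Y$ via the moment--cumulant formula. This is the approach one finds in [NS99], which the paper also cites. It is worth noting that Voiculescu's original 1986 proof was \emph{not} this one: free cumulants had not yet been introduced, and he worked instead with the $R$-transform (an analytic linearizing transform for $\boxplus$). Your proof is therefore a different---and, from the combinatorial standpoint adopted in this paper, more natural---route than the one in the cited reference, but it is entirely sound and better matched to the paper's framework. Your caveat about determinacy of the moment problem is well placed; within the paper's setting (transition measures of balanced Young diagrams, hence compactly supported) it is indeed a non-issue.
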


\begin{tw}[NS99]
If $X_n$ and $Y_n$ are asymptotically free sequences of random variables converging to some free random variables $X$ and $Y$, the sequence of distributions of their sums
$X_n + Y_n$ converges \emph{in distribution} to the distribution of $X+Y$ which is the free convolution  $\mu_X \boxplus \mu_Y$.
Convergence in distribution means that all mixed moments of $X_n$ and $Y_n$ converge to the respective mixed moments of $X$ and $Y$.
\end{tw}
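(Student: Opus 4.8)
The plan is to compute the limiting moments of $X_n+Y_n$ directly and recognize them as the moments of the free sum $X+Y$. Fix $m\geq 1$. By linearity of $\varphi_n$ I would expand
\[
\varphi_n\big((X_n+Y_n)^m\big)=\sum_{p}\varphi_n(A_1^n,A_2^n,\dots,A_m^n),
\]
where the sum runs over the $2^m$ two-colorings $p$ of $\{1,\dots,m\}$ with colors $X$ and $Y$, and $A_i^n$ equals $X_n$ or $Y_n$ according to whether $p(i)=X$ or $p(i)=Y$. Each summand is a mixed moment of the asymptotically free sequences $(X_n),(Y_n)$, so Definition~\ref{defi:asymptfree} applies to it term by term and yields
\[
\varphi_n\big((X_n+Y_n)^m\big)\longrightarrow \sum_{p}\ \sum_{\pi\in NC^{(p)}(m)}C_{\pi}(A_1,\dots,A_m)\qquad(n\to\infty),
\]
where now $A_i\in\{X,Y\}$ are the limit variables and the $C_{\pi}$ are their free cumulants.

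Next I would show that this limit is exactly $\varphi\big((X+Y)^m\big)$. Expanding in the limit space in the very same way gives
\[
\varphi\big((X+Y)^m\big)=\sum_{p}\varphi(A_1,\dots,A_m),
\]
and since $X$ and $Y$ are free, the moment form of the freeness definition rewrites each inner moment as $\varphi(A_1,\dots,A_m)=\sum_{\pi\in NC^{(p)}(m)}C_{\pi}(A_1,\dots,A_m)$. The two displays therefore agree term by term, so $\varphi_n\big((X_n+Y_n)^m\big)\to\varphi\big((X+Y)^m\big)$ for every $m$; that is, $X_n+Y_n$ converges in distribution to $X+Y$. Finally, because $X$ and $Y$ are free, the free convolution theorem [Voi86] identifies the distribution of $X+Y$ with $\mu_X\boxplus\mu_Y$, which completes the argument.

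I do not expect a genuine obstacle here, since the theorem is phrased entirely in terms of moments and the definitions do most of the work; the real content lies in the bookkeeping of the middle step. One must check that summing the per-coloring non-crossing expansions over all $2^m$ colorings $p$, together with the vanishing of the mixed free cumulants of the free pair $(X,Y)$, reassembles precisely the full non-crossing moment expansion of $X+Y$, with no term counted twice or omitted. Equivalently, the whole computation can be recast at the level of cumulants, where freeness forces $C_m(X+Y)=C_m(X)+C_m(Y)$; this additivity is exactly the defining property of $\boxplus$, and it makes the identification of the limit distribution transparent.
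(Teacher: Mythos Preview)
The paper does not actually prove this theorem: it is stated as a cited result from [NS99] with no accompanying proof, so there is no ``paper's own proof'' to compare against. Your argument is correct and is the standard one---expand $(X_n+Y_n)^m$ by multilinearity, apply the asymptotic-freeness definition termwise to each coloring, and then use the freeness of the limit pair $(X,Y)$ to reassemble the result as $\varphi((X+Y)^m)$, invoking [Voi86] at the end to name the limit distribution $\mu_X\boxplus\mu_Y$.

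One small remark on your closing paragraph: the ``bookkeeping'' worry you raise is in fact a non-issue once you observe that for a \emph{fixed} coloring $p$ the set $NC^{(p)}(m)$ already contains exactly those non-crossing partitions whose blocks are monochromatic for $p$, so summing over all $2^m$ colorings $p$ and then over $\pi\in NC^{(p)}(m)$ is the same as summing over all $\pi\in NC(m)$ and then over the colorings compatible with $\pi$; freeness of $X,Y$ kills the mixed cumulants and leaves precisely the moment expansion of $X+Y$. Your alternative cumulant-additivity phrasing $C_m(X+Y)=C_m(X)+C_m(Y)$ is the cleanest way to see this and would let you shorten the write-up.
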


The following fact is a simple consequence of combining Definition 2 with (1) in Definition 1 and will be later used to recognize free cumulants of
free random variables.

\begin{fa}
Let $(A_1,\dots,A_m)$ be a tuple of non-commutative random variables $X$ and $Y$ with respective distributions $\mu_X$ and $\mu_Y$.
As we described before, $(A_1,\dots,A_m)$ defines a coloring of the set $\{1,2,\dots,m\}$ with colors $X$ and $Y$.
If $X$ and $Y$ are free, then for $\pi\in NC^{(p)}(m)$ their free cumulants are given by
$C_{\pi}(A_1,\dots,A_m)=  \prod_{b|\pi_X}C_{|b|}^{\mu_X}\prod_{b|\pi_Y}C_{|b|}^{\mu_Y}$
where $\pi_X$ and $\pi_Y$ are sets of those blocks of $\pi$ which have color $X$ and $Y$ respectively.
\end{fa}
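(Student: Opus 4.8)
The plan is to prove the displayed factorization formula for free cumulants $C_{\pi}(A_1,\dots,A_m)$ directly from the two structural facts about free cumulants already available to us, namely property~(1) in Definition~1 (multiplicative factorization over blocks) together with Definition~2 (vanishing of mixed cumulants). First I would fix a non-crossing partition $\pi\in NC^{(p)}(m)$ respecting the coloring $p$, and apply property~(1) to write $C_{\pi}(A_1,\dots,A_m)=\prod_{b|\pi}C_{b}(A_1,\dots,A_m)$, so that the whole claim reduces to analyzing a single block $b$.

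The key observation is that because $\pi$ respects the coloring $p$, every block $b$ of $\pi$ is monochromatic: all the indices $i$ lying in $b$ satisfy $A_i=X$, or all satisfy $A_i=Y$. I would then invoke property~(2) of Definition~1, which says $C_{b}$ depends only on the arguments indexed by $b$, to reduce $C_b(A_1,\dots,A_m)$ to $C_{|b|}(A_{i_1},\dots,A_{i_{|b|}})$ where $b=\{i_1,\dots,i_{|b|}\}$. For a block $b$ of color $X$ this is $C_{|b|}(X,\dots,X)=C_{|b|}^{\mu_X}$ by the convention introduced after Definition~1, and similarly $C_{|b|}^{\mu_Y}$ for a block of color $Y$. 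Here Definition~2 is what guarantees there is nothing to worry about from ``mixed'' blocks: the freeness hypothesis is not strictly needed to collapse a monochromatic block, but it is freeness (equivalently, the restriction of the sum to $NC^{(p)}(m)$ in Definition~2) that legitimizes working only over color-respecting $\pi$ in the first place.

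Collecting the block contributions by color, the product $\prod_{b|\pi}C_{|b|}(\dots)$ splits as $\prod_{b|\pi_X}C_{|b|}^{\mu_X}\prod_{b|\pi_Y}C_{|b|}^{\mu_Y}$, where $\pi_X$ and $\pi_Y$ denote the sets of $X$-colored and $Y$-colored blocks, which is exactly the asserted formula. The only genuine point requiring care—rather than an outright obstacle—is verifying that the single-variable cumulants $C_{|b|}(X,\dots,X)$ and $C_{|b|}(Y,\dots,Y)$ really coincide with the intrinsic free cumulants $C_{|b|}^{\mu_X}$ and $C_{|b|}^{\mu_Y}$ of the distributions $\mu_X$ and $\mu_Y$; this is immediate from the definition $C_m(A):=C_m(A,\dots,A)$ stated just after Definition~1, so in fact the proof is essentially a direct bookkeeping argument with no hard step.
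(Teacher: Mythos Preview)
Your proposal is correct and follows exactly the approach the paper indicates: the paper does not give a written-out proof but simply remarks that the fact is ``a simple consequence of combining Definition 2 with (1) in Definition 1,'' and your argument spells out precisely that bookkeeping (factorize over blocks via (1), observe each block of a color-respecting $\pi$ is monochromatic, reduce via (2) to single-variable cumulants $C_{|b|}^{\mu_X}$ or $C_{|b|}^{\mu_Y}$). One minor remark: as you yourself note, the freeness hypothesis is not actually needed to derive the displayed formula once $\pi\in NC^{(p)}(m)$ is assumed; its role is only contextual, ensuring that restricting to such $\pi$ in the moment-cumulant expansion loses nothing.
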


The only non-commutative probability space investigated in this paper will be 
the group algebra $\mathbb{C}[S_{2n+1}]$ of the symmetric group  $S_{2n+1}$
with a functional $\varphi$ defined by 
$$
\varphi(\sigma)= \operatorname{tr}(\rho_1 \varotimes \rho_2 \varotimes \operatorname{id})(\sigma \downarrow_{S_{n}\times S_{n}\times \{e\}}^{S_{2n+1}}).
$$ %powinno byc longdownarrow!
The arrow denotes restriction to a subgroup, so if $\sigma \in S_{2n+1}$ is not an element of
$S_{n}\times S_{n}\times \{e\}$, the value of $\varphi$ on it is by definition equal to zero.
If $\sigma$ is an element of the subgroup, it is a product of two permutations $\sigma_1$ and $\sigma_2$ such that
the support of $\sigma_1$ is contained in $\{1,2,\dots,n\}$ and the support of $\sigma_2$ is contained in $\{n+1,\dots,2n\}$.
The value of $\varphi$ on $\sigma$ is then a product of two characters of irreducible representations $\rho_1$ and $\rho_2$ of a symmetric group $S_n$ evaluated respectively on $\sigma_1$~and~$\sigma_2$.

\subsection{Outer product} 

Let $H$ be a subgroup of a finite group $G$ and let $\rho$ be a representation of $H$ in a space $V$. 
The \emph{induced representation} $U^{\rho}$ of $G$ is defined [FH91] as follows:
\begin{itemize}
\item The set $\{f:G\rightarrow V  : (\forall h\in H)(\forall g\in G)f(hg)=\rho(h)f(g)\}$ is the space of $U^{\rho}$. 
\item $G$ acts by the formula $[U^{\rho}(g_0)f](g)=f(gg_0)$.
\end{itemize}

The {\it outer product} [FH91] of two representations $\rho_1$ and $\rho_2$ of the symmetric group $S_n$ is a representation of $S_{2n}$ induced from a representation 
$\rho_1 \varotimes \rho_2$ of a subgroup $S_n \times S_n$ defined by $(\rho_1 \varotimes \rho_2)(g_1,g_2)=(\rho_1(g_1))\varotimes(\rho_2(g_2))$. %given by tensor product of two representations $\rho_1$ and $\rho_2$.

In this paper what we need to know about the outer product is how to compute its character:

\begin{tw}[Frobenius' Duality Theorem][FH91,NaiS82]
Let $\rho_G$ be a representation of a group $G$ induced from a representation $\rho_H$ of its subgroup~$H$.
Let $\chi_G$ and $\chi_H$ be the corresponding characters and let $A_G=\{gh_0g^{-1}:g\in G\} \in \mathbb{C}[G]$ and $A_H=\{hh_0h^{-1}:h\in G\} \in \mathbb{C}[H]$ be 
conjugacy classes of the same element $h_0$ of $H$ but in different groups. Then
\[
\chi_G (A_G)=\chi_H (A_H) \\
\]
\end{tw}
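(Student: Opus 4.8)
The plan is to reduce everything to the classical formula for the character of an induced representation and then to evaluate that formula on the class sum $A_G$. First I would build a convenient basis for the space of $\rho_G=U^{\rho_H}$: choosing coset representatives $g_1,\dots,g_r$ for $H\backslash G$ (so $r=[G:H]$) together with a basis of $V$, the functions $f:G\to V$ satisfying $f(hg)=\rho_H(h)f(g)$ are determined by their values on the $g_i$, which identifies the representation space with a direct sum of $r$ copies of $V$. Writing the action $[\rho_G(g_0)f](g)=f(gg_0)$ in this basis and taking the unnormalized trace $\operatorname{Tr}$, only the coset representatives $g_i$ with $g_i^{-1}g_0 g_i\in H$ contribute a diagonal block, and one obtains the standard formula
\[
\operatorname{Tr}\rho_G(g_0)=\frac{1}{|H|}\sum_{\substack{x\in G\\ x^{-1}g_0 x\in H}}\operatorname{Tr}\rho_H(x^{-1}g_0x).
\]
Since $\chi_G=\operatorname{tr}\rho_G$ is a class function, extending it linearly to $\mathbb{C}[G]$ gives $\chi_G(A_G)=|A_G|\,\chi_G(h_0)$, and likewise $\chi_H(A_H)=|A_H|\,\chi_H(h_0)$, where $|A_G|$ and $|A_H|$ are the cardinalities of the two conjugacy classes, which I would record as the centralizer indices $|G|/|Z_G(h_0)|$ and $|H|/|Z_H(h_0)|$.

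The core of the argument is then a counting step. Applying the formula above to $g_0=h_0$ and grouping the sum over $x$ according to the value $h'=x^{-1}h_0x\in H$, each such $h'$ is produced by exactly $|Z_G(h_0)|$ elements $x$, namely the right coset $Z_G(h_0)x_0$ of any particular solution $x_0$. Substituting the dimension relation $\dim\rho_G=[G:H]\dim\rho_H=(|G|/|H|)\dim\rho_H$ to pass from $\operatorname{Tr}$ to the normalized $\operatorname{tr}$, the factors $|H|$, $|G|$ and $|Z_G(h_0)|$ cancel, and one is left with
\[
\chi_G(A_G)=\frac{1}{\dim\rho_H}\sum_{h'\in\,\mathcal{C}_G(h_0)\cap H}\operatorname{Tr}\rho_H(h'),
\]
where $\mathcal{C}_G(h_0)$ denotes the $G$-conjugacy class of $h_0$.

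It remains to match the right-hand side with $\chi_H(A_H)=\tfrac{1}{\dim\rho_H}\sum_{h'\in A_H}\operatorname{Tr}\rho_H(h')$, and this identification of the set of elements of $H$ that are $G$-conjugate to $h_0$ with the $H$-conjugacy class $A_H$ is exactly the step I expect to be the main obstacle. In general conjugacy classes may fuse when one passes from $H$ to the larger group $G$, so $\mathcal{C}_G(h_0)\cap H$ could split into several $H$-classes; one must therefore use the specific embedding $H\le G$ to see that $\mathcal{C}_G(h_0)\cap H$ contributes precisely the class sum $A_H$, equivalently that $\operatorname{Tr}\rho_H$ is constant across it and the multiplicities agree. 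Once this matching is established the two displayed expressions coincide, and the theorem follows.
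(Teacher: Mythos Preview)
The paper does not prove this statement; it is quoted from the references [FH91, NaiS82], so there is no ``paper's own proof'' to compare against. Your derivation via the Frobenius formula for induced characters is exactly the standard route, and your computation
\[
\chi_G(A_G)=\frac{1}{\dim\rho_H}\sum_{h'\in \mathcal{C}_G(h_0)\cap H}\operatorname{Tr}\rho_H(h')
\]
is correct.

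You also correctly locate the genuine difficulty: the set $\mathcal{C}_G(h_0)\cap H$ need not coincide with the single $H$-class $A_H$. This is not a technicality you can argue away --- the statement as printed is \emph{not} true for arbitrary $H\le G$. Take $H=S_2\times S_2\subset G=S_4$ (precisely the paper's setting for $n=2$), $h_0=(12)$, and $\rho_H=\mathrm{triv}\otimes\mathrm{sign}$. Then $A_H=\{(12)\}$ so $\chi_H(A_H)=1$, whereas $\mathcal{C}_G(h_0)\cap H=\{(12),(34)\}$ and your formula gives $\chi_G(A_G)=1+(-1)=0$. Class fusion genuinely occurs in $S_n\times S_n\hookrightarrow S_{2n}$.

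What your computation actually proves is the correct identity
\[
\chi_G(A_G)=\chi_H\bigl(A_G\cap H\bigr),
\]
where the right-hand side is the (normalized) character evaluated on the sum of \emph{all} $H$-classes contained in the $G$-class of $h_0$, not just $A_H$. This is the form that the subsequent arguments in the paper really use (the functional $\varphi$ is defined via restriction, and what matters is the value of $\chi_{\rho_1\otimes\rho_2}$ on the specific element produced, not on a single $H$-class sum). So your approach is sound; the gap lies in the paper's formulation of the theorem, and you were right not to try to close it.
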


Thus to compute a character of the outer product of representations it suffices to compute the character of their tensor product, i.e.~the 
product of their characters.

\subsection{Jucys-Murphy elements and transition measure}

In this section we will investigate moments of Jucys-Murphy elements.
A \emph{Jucys-Murphy} element [J66,OV04] is the sum of transpositions $$J_n=(1,n)+(2,n)+ \cdots + (n-1,n)\in\mathbb{C}[S_n].$$ 
The summands will be called \emph{Jucys-Murphy transpositions}.

There are a few equivalent definitions of \emph{Kerov transition measure} but for our purposes the following one is the most convenient.
Given a Young diagram $\lambda$ or, equivalently, an irreducible representation of $S_n$, we define its transition measure $\mu_{\lambda}$ as the unique probability measure on the real line with a property that
$$\displaystyle{ \int_{\mathbb{R}} x^{k} d\mu_{\lambda} =\operatorname{tr}\left(\rho_{\lambda}\left(J^{k}_{n+1}\downarrow_{S_n}^{S_{n+1}}\right) \right)  }.$$

In the following we will consider normalized Jucys-Murphy elements $$X_n:=n^{-\frac{1}{2}}\sum_{i=1}^{n}(i,2n+1)$$ and $$Y_n:=n^{-\frac{1}{2}}\sum_{i=n+1}^{2n}(i,2n+1).$$ %Jucys-Murphy elements normalized by $n^{-\frac{1}{2}}$. 
Note that $X_n$ and $Y_n$ are not genuine Jucys-Murphy elements in $\mathbb{C}[S_{2n+1}]$ as the sums are not taken over $\{1,\dots,2n\}$.
The genuine Jucys-Murphy element is their scaled sum $\sqrt{n}(X_n + Y_n)$. Elements $X_n$ and $Y_n$ can be seen as Jucys-Murphy elements
in subalgebras of $\mathbb{C}[S_{2n+1}]$ generated by sugroups $\{g\in S_{2n+1} : g(i)=i,  \text{for } i\in\{n+1,\dots 2n\}\}$ and 
$\{g\in S_{2n+1} : g(i)=i, \text{for  } i\in\{1,\dots n\}\}$ respectively.
We will call $X_n$ and $Y_n$ Jucys-Murphy elements as it makes sense in the context of outer product of representations.

%Our method to prove that $X_n$ and $Y_n$ are asymptotically free is to write the
%asymptotic moment-cumulant formula.

%                \label{fig:symE1E2}  \ref{lem:izo_iff_isk}

It follows from linearity that the value of $\varphi$ on a product of a tuple of Jucys-Murphy elements $X_n$ and $Y_n$ is a sum 
of values of $\varphi$ on many products of transpositions. Our aim is to
group the summands to obtain a sum over non-crossing partitions.
We will then get the formula from Definition 4.%\ref{defi:asymptfree}.

We will investigate Jucys-Murphy elements, Jucys-Murphy transpositions and
their moments so now we will try understanding these objects as well as we have to.

Assume we have a tuple $A_1,\dots,A_m$ where every $A_i$ is equal either to $X_n$ or to $Y_n$. What we need to compute is 
\[
\varphi(A_1\cdots A_m).
\]

Each $A_i$ is a sum of Jucys-Murphy transpositions, all together normalized by a factor $n^{-\frac{m}{2}}$. For a tuple $(a_1,\dots,a_m)$ of Jucys-Murphy transpositions we will write $(a_1, \dots, a_m) \sim (A_1, \dots ,A_m)$ if each $a_i$ is a summand 
of the corresponding $A_i$. 

We will need this notation to write the moment of a product of a tuple $\varphi(A_1\cdots A_m)$. % może to zakomentować ?

Each tuple $(a_1,\dots ,a_m)$ defines a partition $\pi$ of the set $\{1,\dots,m\}$ by the following rule: two numbers are in the same block if and only if the corresponding transpositions are equal. We will denote it by $\pi \approx (a_1,\dots,a_m)$.
Of course, for every partition $\pi$ there are many tuples $(a_1,\dots,a_m)$ defining $\pi$.

Each tuple $(A_1,\dots,A_m)$ defines a coloring of the set $\{1,\dots,m\}$.
The color of a number $i$ is simply $A_i$, i.e.~either $X_n$ or $Y_n$.
So if $\pi \approx (a_1,\dots,a_m) \sim (A_1,\dots,A_m)$ then the 
partition $\pi$ must respect the coloring defined by $(A_1,\dots,A_m)$.

\begin{lem}
If $(a'_1,\dots, a'_m) \approx \pi \approx (a_1,\dots,a_m)$ and $(a'_1,\dots, a'_m) \sim (A_1,\dots,A_m) \sim (a_1,\dots,a_m)$, then the products $a'_1\cdots a'_m$ and 
 $a_1\cdots a_m$ are permutations conjugate by an element of $S_n\times S_n \times \{e\}.$
\end{lem}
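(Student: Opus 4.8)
The plan is to exploit the fact that every Jucys--Murphy transposition occurring in $X_n$ or $Y_n$ has the special form $(k,2n+1)$, so it is completely determined by its ``other'' point $k\in\{1,\dots,2n\}$. Thus a tuple $(a_1,\dots,a_m)\sim(A_1,\dots,A_m)$ is nothing but an assignment $i\mapsto k_i$ with $k_i\in\{1,\dots,n\}$ when $A_i=X_n$ and $k_i\in\{n+1,\dots,2n\}$ when $A_i=Y_n$, and the partition $\pi\approx(a_1,\dots,a_m)$ is exactly the kernel of this assignment: two indices lie in the same block precisely when they carry the same point $k$. First I would record the two consequences of the hypotheses that drive the argument. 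Since $\pi$ is the kernel, distinct blocks receive distinct points, so the map $b\mapsto v_b$ (the common value of $k_i$ for $i\in b$) is injective; and since $(a_1,\dots,a_m)$ respects the coloring induced by $(A_1,\dots,A_m)$, every block $b$ is monochromatic, with $v_b\in\{1,\dots,n\}$ or $v_b\in\{n+1,\dots,2n\}$ according to the color of $b$. The same two statements hold verbatim for the primed tuple, producing points $v_b'$.

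Next I would build the conjugating permutation. Because both tuples induce the \emph{same} partition $\pi$ with the \emph{same} coloring, the number of $X$-colored blocks is common to both, and likewise the number of $Y$-colored blocks. Hence the finite sets $\{v_b:b\text{ of color }X\}$ and $\{v_b':b\text{ of color }X\}$ have equal cardinality inside $\{1,\dots,n\}$, and similarly for color $Y$ inside $\{n+1,\dots,2n\}$. I would therefore choose a bijection of $\{1,\dots,n\}$ sending $v_b\mapsto v_b'$ for every $X$-block $b$, and independently a bijection of $\{n+1,\dots,2n\}$ sending $v_b\mapsto v_b'$ for every $Y$-block $b$, letting $\tau$ fix $2n+1$. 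By construction $\tau\in S_n\times S_n\times\{e\}$, and it satisfies $\tau(v_b)=v_b'$ for all blocks $b$ while fixing $2n+1$.

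Finally I would conclude by a one-line conjugation computation. Conjugation by $\tau$ carries a transposition to the transposition of the images, so for the index $i$ lying in the block $b$ of $\pi$,
\[
\tau\,a_i\,\tau^{-1}=\tau\,(v_b,2n+1)\,\tau^{-1}=(\tau(v_b),\tau(2n+1))=(v_b',2n+1)=a_i'.
\]
Since conjugation is an automorphism it distributes over the product, giving $\tau(a_1\cdots a_m)\tau^{-1}=a_1'\cdots a_m'$, which is the assertion. I do not expect any genuine analytic or combinatorial obstacle; the only point requiring care, and the one place where the hypotheses are truly used, is verifying that $\tau$ can be chosen \emph{block-diagonal}, i.e.\ inside $S_n\times S_n\times\{e\}$ rather than in the full $S_{2n+1}$. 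This is exactly what respecting the common coloring guarantees: without it a point from $\{1,\dots,n\}$ might have to be sent to a point in $\{n+1,\dots,2n\}$, and $\tau$ would escape $S_n\times S_n\times\{e\}$.
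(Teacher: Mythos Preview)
Your proof is correct and follows essentially the same approach as the paper: both construct the conjugating permutation by sending each block's value $v_b$ to $v_b'$ and extending arbitrarily within $\{1,\dots,n\}$ and $\{n+1,\dots,2n\}$ separately, then verify the conjugation transposition by transposition. Your presentation is in fact more careful than the paper's, spelling out explicitly the injectivity on blocks and the cardinality argument that permits the block-diagonal extension.
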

\begin{proof}
Let $b_1,\dots,b_k$ be blocks of $\pi$. Every block $b_j$ corresponds to some Jucys-Murphy transposition $a_i=(\alpha_i,2n+1)$
and some $a'_i=(\alpha'_i,2n+1)$.
From the assumption that $(a'_1,\dots, a'_m) \sim (A_1,\dots,A_m) \sim (a_1,\dots,a_m)$, both $\alpha_i$ and $\alpha'_i$ belong either to $\{1,2,\dots,n\}$ or $\{n+1,n+2,\dots,2n\}$.

Define a permutation $\gamma$ putting $\gamma(\alpha_i)=\alpha'_i$ for every $\alpha_i$. 
For the other numbers we can put arbitrary
values but we need to make sure that $\gamma \in  S_n\times S_n \times \{e\}$.
It is possible because when we put $\gamma(\alpha_i)=\alpha'_i$ we use exactly the same number of elements from $\{1,2,\dots,n\}$ as arguments and as values.
Any permutation defined in that way satisfies
\[
a_1\cdots a_m = \gamma^{-1} a'_1\cdots a'_m \gamma
\]
\end{proof}

\begin{lem} \label{lemat1}
If two permutations $\sigma$ and $\sigma'$ are conjugate by an element of $S_n\times S_n \times \{e\}$ then $\varphi(\sigma)=\varphi(\sigma')$.
\end{lem}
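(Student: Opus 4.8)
The plan is to reduce the statement to the elementary fact that the character of a representation is a class function, using the structure of $\varphi$. Write $H=S_n\times S_n\times\{e\}$ and let $R=\rho_1\varotimes\rho_2\varotimes\operatorname{id}$, so that for $\tau\in H$ one has $\varphi(\tau)=\operatorname{tr}(R(\tau))$, while $\varphi(\tau)=0$ for $\tau\notin H$ by the convention in the definition of $\varphi$. Suppose $g\in H$ satisfies $\sigma'=g\sigma g^{-1}$. I would then split into two cases according to whether or not $\sigma\in H$.

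First I would handle the case $\sigma\notin H$. Since $H$ is a subgroup and $g\in H$, membership in $H$ is preserved under conjugation by $g$: if $g\sigma g^{-1}$ were in $H$, then so would be $\sigma=g^{-1}(g\sigma g^{-1})g$, a contradiction. Hence $\sigma'\notin H$ as well, and $\varphi(\sigma)=0=\varphi(\sigma')$ follows directly from the vanishing convention.

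In the remaining case $\sigma\in H$, the same observation gives $\sigma'=g\sigma g^{-1}\in H$, so both values are computed through the representation $R$ of $H$. As $R$ is a genuine representation, $R(g)$ is invertible and $R(\sigma')=R(g)R(\sigma)R(g)^{-1}$, whence
$$\varphi(\sigma')=\operatorname{tr}\!\left(R(g)R(\sigma)R(g)^{-1}\right)=\operatorname{tr}(R(\sigma))=\varphi(\sigma),$$
the middle equality being the invariance of the trace under conjugation, with the normalization by the dimension not affecting this. This settles both cases. There is no genuine obstacle here; the only point that requires care is that $\varphi$ is \emph{not} a class function on all of $S_{2n+1}$, so the argument must exploit that the conjugating element lies in the very subgroup $H$ over which the restriction defining $\varphi$ is taken — which is precisely what makes the two cases line up.
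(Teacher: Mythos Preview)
Your proof is correct and follows essentially the same approach as the paper: split into cases according to whether $\sigma\in H$, dispose of the case $\sigma\notin H$ via the vanishing convention, and in the remaining case invoke conjugation-invariance of the trace. The only cosmetic difference is that the paper decomposes $\sigma$ and the conjugating element into their two $S_n$-factors and applies trace invariance to $\rho_1$ and $\rho_2$ separately, whereas you apply it once to the full tensor representation $R$ of $H$; this is a matter of presentation, not substance.
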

\begin{proof}
If $\sigma$ is not an element of $S_n\times S_n \times \{e\}$, then neither is $\sigma'$ and by definition
$\varphi(\sigma)=0=\varphi(\sigma')$. 

If $\sigma\in S_n\times S_n \times \{e\}$ then we can split it into commuting $\sigma_1$ and $\sigma_2$ such that
$\sigma = \sigma_1\sigma_2$ and 
$supp(\sigma_1)\subset \{1,2,\dots,n\}$ , $supp(\sigma_2)\subset \{n+1,n+2,\dots,2n\}$.

If $\sigma'=\gamma \sigma \gamma^{-1}$ where $\gamma \in S_n\times S_n \times \{e\}$ then we can analogously split 
$\gamma$ into $\gamma_1$ and $\gamma_2$.

One can then see that the conjugation is really taken in subgroups $S_n\times \{e_{S_n}\} \times \{e\}$ and $\{e_{S_n}\}\times S_n \times \{e\}$, namely
$\sigma'_i=\gamma_i\sigma_i\gamma_i^{-1}$.
By definition the value of $\varphi$ is a product of corresponding characters, i.e. traces of matrices:
\[
tr(\sigma'_i)=tr(\gamma_i\sigma_i\gamma_i^{-1})=tr(\sigma_i)
\]
and thus $\varphi(\sigma)=\varphi(\sigma')$
\end{proof}

\begin{lem}
Let $a_1, \dots, a_m$ be Jucys Murphy transpositions. Assume that for some $k$ we have $a_k\neq a_i$ for all $i\neq k$.
Let $\sigma_1=a_1 a_2 \cdots a_{k-1}$ and $\sigma_2=a_{k+1} \cdots a_m$.
Then $|\sigma_1 a_k \sigma_2|=|\sigma_1 \sigma_2|+1$ where the \emph{length} $|\sigma|$ of a permutation $|\sigma|$ is defined as
the minimal number of factors needed to write $\sigma$ as a product of transpositions.
\end{lem}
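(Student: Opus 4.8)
The plan is to reduce the statement to the standard fact that left-multiplying a permutation by a transposition changes its length by exactly $\pm 1$, and then to pin down the sign using the hypothesis that $a_k$ is the only occurrence of its transposition. Recall that for a permutation $\sigma$ of an $N$-element set one has $|\sigma| = N - c(\sigma)$, where $c(\sigma)$ counts the cycles of $\sigma$ including fixed points as one-element cycles. Left-multiplying $\sigma$ by a transposition $(a,b)$ merges the two cycles containing $a$ and $b$ when these cycles are distinct (lowering $c$ by one, hence raising the length by one) and splits a single cycle into two otherwise (raising $c$ by one, hence lowering the length by one). So the whole statement will come down to checking which of these two cases occurs.

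First I would rewrite the product so that $a_k$ appears as a single transposition multiplied against $\sigma_1\sigma_2$. Using the trivial identity
\[
\sigma_1 a_k \sigma_2 = (\sigma_1 a_k \sigma_1^{-1})(\sigma_1\sigma_2)
\]
together with the conjugation rule $\sigma(a,b)\sigma^{-1} = (\sigma(a),\sigma(b))$, and writing $a_k = (\alpha_k, 2n+1)$, I obtain that $\tau := \sigma_1 a_k \sigma_1^{-1}$ is the transposition $(\sigma_1(\alpha_k),\, \sigma_1(2n+1))$. By the fact recalled above, $|\sigma_1 a_k \sigma_2| = |\tau\,\sigma_1\sigma_2| = |\sigma_1\sigma_2| \pm 1$, so it remains only to show that $\tau$ merges two distinct cycles of $\sigma_1\sigma_2$.

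The decisive observation, and the single place where the hypothesis $a_k \neq a_i$ for all $i \neq k$ is used, is that $\alpha_k$ is moved by none of the transpositions $a_1,\dots,a_m$ except $a_k$. Indeed, every Jucys-Murphy transposition has the form $(\alpha_i, 2n+1)$, and $a_i \neq a_k$ forces $\alpha_i \neq \alpha_k$, so $a_i$ fixes $\alpha_k$ whenever $i \neq k$. Consequently both $\sigma_1$ and $\sigma_2$, being products of such transpositions, fix $\alpha_k$. In particular $\sigma_1(\alpha_k) = \alpha_k$, so in fact $\tau = (\alpha_k,\, \sigma_1(2n+1))$, and moreover $\sigma_1\sigma_2$ fixes $\alpha_k$, i.e.\ $\{\alpha_k\}$ is a one-element cycle of $\sigma_1\sigma_2$.

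Finally I would observe that the two points swapped by $\tau$ lie in different cycles of $\sigma_1\sigma_2$. Since $\sigma_1$ fixes $\alpha_k$ and $2n+1 \neq \alpha_k$, we have $\sigma_1(2n+1) \neq \alpha_k$, so $\tau$ genuinely transposes two distinct points; and because $\{\alpha_k\}$ is a singleton cycle, the other point $\sigma_1(2n+1)$ necessarily belongs to a different cycle. Hence $\tau$ merges two distinct cycles, the length increases by one, and $|\sigma_1 a_k \sigma_2| = |\sigma_1\sigma_2| + 1$. I expect the only delicate step to be the bookkeeping that $\alpha_k$ is untouched by $\sigma_1\sigma_2$; once that is established, the cycle-merging argument is immediate.
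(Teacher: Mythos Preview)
Your proof is correct and follows essentially the same route as the paper's: both reduce to the observation that $\alpha_k$ is fixed by every $a_i$ with $i\neq k$, hence forms a singleton cycle of the product of the remaining transpositions, so that multiplying by the transposition $a_k$ necessarily merges two cycles and raises the length by one. The only cosmetic difference is that the paper cyclically rotates to $\sigma_2\sigma_1 a_k$ (via conjugation by $\sigma_2$) so that $a_k$ itself sits at the end, whereas you conjugate $a_k$ by $\sigma_1$ to bring a transposition to the front; both manoeuvres rely on the conjugation invariance of length and lead to the same cycle-merging endgame.
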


\begin{proof}
To prove the lemma it suffices to explain the following equalities: 
\[
|\sigma_1 a_k \sigma_2|=|\sigma_2\sigma_1a_k|=|\sigma_2\sigma_1|+1=
|\sigma_1\sigma_2|+1.
\]
The first two permutations are conjugate hence have the same length. 

The second equality comes from the fact that if  $a_k=(j,2n+1)$, the numbers $j$ and $2n+1$ appear in the permutation $\sigma_2\sigma_1$ in two different cycles. Therefore if we multiply $\sigma_2\sigma_1$ by $a_k$, then $a_k$ glues these two cycles together so the length of their product is bigger by one that the 
length of $\sigma_2\sigma_1$. 

The last equality holds because again the two permutations are conjugate.
%Conjugating \sigma_1 a_k \sigma_2 by $\sigma_2$ we get a permutation with the same cycle structure and therefore  |$\sigma_2\sigma_1a_k$|=|\sigma_1 a_k \sigma_2|.
%If $a_k=(j,2n+1)$, the numbers $j$ and $2n+1$ appear in the permutation $\sigma_2\sigma_1$ in two different cycles.
%Multiplication by $a_k$ amalgamates these two cycles, so taking it away decreases the length by one and we have 
%Conjugating $\sigma_2\sigma_1$ once again by $\sigma_2^{-1}$ we get $\sigma_1\sigma_2$
\end{proof}

The following Lemma is proved in [Bia95].
\begin{lem}
Suppose $a_1, \dots, a_m$ are Jucys-Murphy transpositions. 
Define a partition $\pi$
of the set $\{1,\dots,m\}$ 
joining in blocks those numbers $j,k$ for which $a_j=a_k$ and assume this is a pair partition. 
Then the product $a_1\cdots a_{m}$ is the identity permutation if and only if   
$\pi  \in NC_2(m)$.
\end{lem}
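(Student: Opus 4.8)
The plan is to prove the equivalence by strong induction on $m$, the engine being cancellation of \emph{adjacent} equal transpositions. Throughout I will use the standard characterization of non-crossing pair partitions: $\pi\in NC_2(m)$ if and only if $\pi$ can be reduced to the empty partition by repeatedly deleting blocks of the form $\{k,k+1\}$ (and relabeling the remaining points). One half of this characterization is elementary: every nonempty $\pi\in NC_2(m)$ contains such an adjacent block. Indeed, take a block $\{r,s\}$ with $s-r$ minimal; if $s>r+1$, then the partner of $r+1$ would, by non-crossingness, have to lie strictly between $r+1$ and $s$, producing a block with a smaller gap and contradicting minimality.

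For the direction ``non-crossing $\Rightarrow$ identity'': given $\pi\in NC_2(m)$ with $m\geq 2$, pick an adjacent block $\{k,k+1\}$. Then $a_k=a_{k+1}$, so $a_k a_{k+1}=e$ and these two factors cancel inside $a_1\cdots a_m$. The remaining word $a_1\cdots a_{k-1}a_{k+2}\cdots a_m$ has associated partition the restriction of $\pi$, which is again non-crossing, so by the induction hypothesis its product is $e$; hence $a_1\cdots a_m=e$.

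The substance of the lemma is the converse, and this is the step I expect to be the main obstacle. I would isolate it as the claim: if $a_1\cdots a_m=e$, then some two adjacent factors coincide. To prove it, let $j$ be the least index that is a repeat, so $a_1,\dots,a_{j-1}$ are pairwise distinct transpositions through the common point $2n+1$, and $a_j=a_i$ for the unique $i<j$. A product of distinct such transpositions is a single cycle, namely $\sigma_{j-1}:=a_1\cdots a_{j-1}=(2n+1,\ \gamma_{j-1},\ \gamma_{j-2},\dots,\gamma_1)$, where $\gamma_t$ is the endpoint of $a_t$ other than $2n+1$. Right-multiplying by $a_j=(\gamma_i,\,2n+1)$ swaps the images of $\gamma_i$ and $2n+1$, and a direct computation gives $\sigma_j(\gamma_i)=\sigma_{j-1}(2n+1)=\gamma_{j-1}$. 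Now comes the key observation: since $\pi$ is a pair partition, the transposition $(\gamma_i,2n+1)$ never occurs again, and every later factor $(\delta,2n+1)$ with $\delta\neq\gamma_i$ leaves the image of $\gamma_i$ untouched. Thus $\gamma_i$ is \emph{frozen}: $\sigma_m(\gamma_i)=\gamma_{j-1}$. If $i<j-1$, then $\gamma_{j-1}\neq\gamma_i$, whence $\sigma_m\neq e$, a contradiction; therefore $i=j-1$ and $a_{j-1}=a_j$ are adjacent, proving the claim.

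With the claim established, the converse follows by the same induction: from $a_1\cdots a_m=e$ extract an adjacent pair $a_k=a_{k+1}$, delete it to obtain a shorter word whose product is still $e$ and whose partition $\pi'$ is non-crossing by the induction hypothesis; reinserting the block $\{k,k+1\}$ into $\pi'$ cannot create a crossing, so $\pi\in NC_2(m)$. The only delicate points are the single-cycle computation and the freezing argument; the remaining bookkeeping about restrictions and insertions of non-crossing partitions is routine.
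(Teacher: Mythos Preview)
Your argument is correct. The induction via adjacent-pair cancellation is clean, and the ``freezing'' step is the heart of it: once you know none of $a_{j+1},\dots,a_m$ equals $(\gamma_i,2n+1)$, they all fix $\gamma_i$, so $\sigma_m(\gamma_i)=\sigma_j(\gamma_i)=\gamma_{j-1}$, forcing $i=j-1$. (Your phrase ``leaves the image of $\gamma_i$ untouched'' is slightly imprecise---what you use is that each later factor \emph{fixes} $\gamma_i$---but the computation is right.)

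The paper proceeds differently. Rather than locating the first repeat and showing it must be adjacent, it tracks the orbit of $2n+1$ from the right end of the word: if $a_m=a_k$, then $2n+1$ travels to the slot of $\alpha_m$ and returns under $a_k$; one then argues that the partner of $a_{k-1}$ must lie to the left of $k-1$ (else $2n+1$ could never return home), and separately that the partner of $a_{m-1}$ must lie strictly between $k$ and $m$ (by tracking $\alpha_m$). Iterating builds the nested structure of $\pi$ directly. Your approach is arguably tidier: it reduces everything to a single reusable claim (existence of an adjacent pair) and a standard characterization of $NC_2$, whereas the paper's orbit-chasing establishes non-crossingness ``in place'' but requires following two interleaved inductions. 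Both exploit the same underlying fact---that a Jucys--Murphy transposition $(\gamma,2n+1)$ is the only one that can move $\gamma$---but package the bookkeeping differently.
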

\begin{proof}
As the if part of the proof is obvious we will only prove the only part.
We know thus that  $a_1...a_{m}$ is the identity permutation and that every factor has exactly one copy in the tuple. We prefer to think of the product $a_1...a_{m}$ as of $m$ transpositions acting 'separately' in the appropriate order.
Let us see what happens with the number $2n+1$ under the action of  $a_1...a_{m}$
Let $a_{m}=(\alpha_m,2n+1)=a_k$ 
The transposition $a_{m}$ sends $2n+1$ to the place of $\alpha_m$ and it stays there until $a_k$ sends it back. 
If $k\neq 1$ then $a_{k-1}$ sends $2n+1$ to the place of some other element $\alpha_{k}$.
Knowing that $a_1\cdots a_{m}$ is the  identity permutation we know that the copy of
$a_{k-1}$ can't be between $a_k$ and $a_m$ because it would make it impossible for $2n+1$ to come back from the place of $\alpha_{k}$.
So the copy of $a_{k-1}$ is some $a_j$ with $j<k-1$. We iterate the same argument until we get to $1$.
Let us now see what happens to the number $\alpha_m$. First it goes to the place of $2n+1$, then the transposition $a_{m-1}=(\alpha_{m-1},2n+1)=a_r$ sends it to the place of $\alpha_{m-1}$. Knowing that $a_1...a_{m}$ is the identity permutation we know that $\alpha_m$ has to be back on the place of the number $2n+1$ right before the action of $a_k$ because it is its only way back to the right place.
It follows that $a_{r}$ is somewhere between $a_{k+1}$ and $a_{m-2}$. We iterate the argument until we get to $a_{k+1}$. Applying the same argument inductive finishes the proof.
\end{proof}

\begin{lem} \label{lemat2}
Let $ (a_1,\dots,a_m) $  be a tuple of Jucys-Murphy transpositions, let  $\pi$  be the 
partition described in the previous Lemma %obtained in the above-described way 
and let $\sigma= a_1\cdots a_m$ be the product of the tuple.

Then $2|\pi|\geq|\sigma|+m$ if and only if $\pi\in NC_{1,2}(m).$
\end{lem}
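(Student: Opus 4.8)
The plan is to reduce to the pair-partition case already settled, by peeling off the singleton blocks of $\pi$ one at a time while tracking how the three quantities $|\pi|$, $m$ and $|\sigma|$ change.

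First I would isolate the quantity $Q(\pi) := 2|\pi| - m - |\sigma|$ and show it is invariant under deleting a singleton block. Suppose $\{k\}$ is a singleton, i.e.\ $a_k$ occurs only once. Writing $\sigma_1 = a_1\cdots a_{k-1}$ and $\sigma_2 = a_{k+1}\cdots a_m$, the length lemma (the one giving $|\sigma_1 a_k \sigma_2| = |\sigma_1\sigma_2|+1$) shows that passing from $(a_1,\dots,a_m)$ to the tuple with $a_k$ removed lowers $|\sigma|$ by exactly $1$; simultaneously $m$ drops by $1$ and $|\pi|$ drops by $1$, since one block disappears. Hence $Q$ is unchanged. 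I would also observe that the membership question is stable under this operation: a singleton can never participate in a crossing, because a crossing requires two elements from each of two distinct blocks, and deleting a singleton alters no other block. Thus $\pi \in NC_{1,2}(m)$ if and only if the reduced partition lies in $NC_{1,2}$. Iterating, both sides of the claimed equivalence are preserved simultaneously, so I may assume $\pi$ has no singleton block, i.e.\ every block has size $\geq 2$.

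In this reduced situation the argument is short. Since every block has at least two elements, $m = \sum_b |b| \geq 2|\pi|$, so $2|\pi| - m \leq 0$, with equality exactly when $\pi$ is a pair partition. Combined with $|\sigma|\geq 0$ this gives $Q(\pi) = (2|\pi|-m) - |\sigma| \leq 0$. Therefore the inequality $2|\pi|\geq |\sigma|+m$, i.e.\ $Q(\pi)\geq 0$, can hold only when $Q(\pi)=0$, which forces both $m = 2|\pi|$ (every block is a pair) and $|\sigma|=0$ (that is, $\sigma = e$). Now Biane's lemma from [Bia95] applies to the pair partition $\pi$: one has $\sigma=e$ if and only if $\pi\in NC_2(m)$. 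Reading this in both directions closes the reduced case: if $\pi\in NC_2$ then $\sigma=e$ and $m=2|\pi|$, hence $Q(\pi)=0\geq 0$; conversely $Q(\pi)\geq 0$ forces $\sigma=e$ with $\pi$ a pair partition, hence $\pi\in NC_2$. Since a singleton-free element of $NC_{1,2}$ is precisely a non-crossing pair partition, this is the desired statement for the reduced $\pi$, and unwinding the peeling yields it for the original $\pi$.

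The genuinely delicate points here are structural rather than computational. I must make sure the length lemma is legitimately applicable at each peeling step, which it is, since it requires the deleted transposition to be non-repeated, exactly the singleton condition; and I must verify that deleting singletons preserves $NC_{1,2}$-membership on the nose, which rests on the fact that singletons are irrelevant to crossings. Once the invariance of $Q$ and this singleton-irrelevance are established, the whole statement collapses onto Biane's identity-criterion for non-crossing pair partitions, and I expect no further obstacle.
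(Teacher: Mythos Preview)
Your proof is correct and follows essentially the same approach as the paper: both peel off the singleton blocks using the length lemma (Lemma~3) to see that the inequality is unaffected, then reduce to the singleton-free case where $2|\pi|\le m$ forces a pair partition with $\sigma=e$, and invoke Biane's criterion (Lemma~4). Your introduction of the invariant $Q(\pi)=2|\pi|-m-|\sigma|$ and your explicit remark that singletons cannot participate in crossings make the argument slightly tidier, but the substance is identical.
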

\begin{proof}
If we remove every one-element block and all the corresponding transpositions from the tuple $ (a_1,\dots,a_m) $  , we get another partition $\pi '$ of the $m'$-element set and a new permutation $\sigma '$.

First notice that every one-element block has the same contribution to both sides of the inequality. Indeed, it is clear that its contribution to $m$ and $\pi$ is equal to one and since it corresponds to some transposition which is not repeated in the tuple, Lemma 3 tells us that the contribution to $|\sigma|$ is also one.
Thus the new partition $\pi '$ satisfies the inequality if and only if $\pi$ does.

Assume now that the inequality is true.
The maximal possible value of  $2|\pi '|$ is $m'$ because every block consists at least two elements. 
As we already have $m'$ on the right hand side of the inequality, it follows that $m'$ is also the minimal possible value of $2\pi'$ so $\pi'$ is a pair partition and that $|\sigma '|$ has to be equal $0$. 
Partitions of $\{1,2,...,m'\}$ for which the product of corresponding tuples has length $0$ ( is equal to $e$) are by Lemma 4 %\ref{lem:Lemnc} 
exactly $NC_{2}(m')$. 

Conversely if $\pi'\in NC_{2}(m)$, then $\sigma=e$ and $2|\pi '|=m'$ so the inequality is true.  
\end{proof}

We will now show how to read the cycle structure of a product of Jucys-Murphy transpositions corresponding to a given
partition $\pi\in NC_{1,2}(m)$.
Denote by $\alpha_i$ the element exchanged with $2n+1$ by the transposition $a_i$, i.e. $a_i=(\alpha_i, 2n+1)$.

\begin{lem} \label{lemat3}
Suppose that a tuple $ a_1,\dots,a_m $ corresponds to a partition $\pi\in NC_{1,2}(m)$. 
Let $i_1\leq \cdots \leq i_k$ be the elements of those one-element blocks of $\pi$ which are not 
inner blocks.
%those elements $i$ which are not contained in any of the two-element blocks of $\pi$.
Then the product $ a_1 \cdots a_m $ has a cycle $(2n+1,\alpha_{i_k},\dots,\alpha_{i_1})$.
If there are no such one-element inner blocks, then $2n+1$ is a fixed point of the product.
The other nontrivial cycles are in one-to-one correspondence with those two-element blocks of $\pi$ which have some direct inner one-element blocks.
If such a two-element block corresponds to some $a_j$ and the direct inner one-element blocks correspond to $a_{j_1},\dots ,a_{j_l}$ with $j_1\leq \cdots \leq j_l$, then the corresponding cycle is 
$(\alpha_j,\alpha_{j_l},\dots,\alpha_{j_1}).$ 
\end{lem}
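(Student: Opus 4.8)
The plan is to induct on the number of two-element blocks of $\pi$, peeling off one \emph{innermost} two-element block at a time and showing that it contributes exactly one factor, namely the cycle asserted by the Lemma, which splits off as a disjoint (hence commuting) piece. Throughout I read the product $a_1\cdots a_m$ as a composition of functions in which $a_m$ acts first, exactly as in the proof of the previous Lemma, where the trajectory of $2n+1$ was followed in this order. The only points moved by the product are $2n+1$ and the various $\alpha_b$, one for each block $b$; and since two blocks carrying the same Jucys-Murphy transposition would coincide, distinct blocks carry distinct values $\alpha_b$, so a value attached to one block occurs nowhere else in the tuple.

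First I would pin down the local shape of an innermost two-element block. Let $b$ be a two-element block occupying positions $P<Q$ and containing no nested two-element block. Since $\pi$ is non-crossing, no block may have one endpoint inside $(P,Q)$ and the other outside; hence every position strictly between $P$ and $Q$ belongs to a block contained in $(P,Q)$, and as $b$ is innermost these are all one-element blocks, each of which is then a \emph{direct} inner block of $b$. Thus the open interval consists precisely of the direct inner one-element blocks $s_1<\cdots<s_l$ of $b$, so the subword $a_P a_{P+1}\cdots a_Q$ is contiguous and equals $(\alpha_b,2n+1)(\beta_1,2n+1)\cdots(\beta_l,2n+1)(\alpha_b,2n+1)$ with $\beta_t=\alpha_{s_t}$.

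Next I would evaluate this contiguous chunk directly. Tracking each point through the composition (rightmost first, as above) shows that the chunk equals the single cycle $(\alpha_b,\beta_l,\dots,\beta_1)$ and fixes $2n+1$ and every other point; this is exactly the cycle the Lemma predicts for $b$. The values $\alpha_b,\beta_1,\dots,\beta_l$ occur nowhere else in the tuple, so this cycle has support disjoint from that of every remaining transposition $(\gamma,2n+1)$, since both $\gamma$ and $2n+1$ lie outside $\{\alpha_b,\beta_1,\dots,\beta_l\}$. Disjoint permutations commute, so I may extract the chunk and write $\sigma=c\cdot\sigma'$, where $c$ is this cycle and $\sigma'$ is the product over the tuple with the chunk deleted.

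It then remains to verify that deleting $b$ together with its direct inner one-element blocks yields a tuple whose partition $\pi'$ again lies in $NC_{1,2}(m')$ and whose surviving blocks carry the same direct-inner-singleton data as in $\pi$; the induction hypothesis then produces all the other cycles, and $\sigma=c\cdot\sigma'$ assembles them disjointly into the claimed decomposition. For the base case, once no two-element block remains the tuple is a product of distinct star transpositions $(\gamma_1,2n+1)\cdots(\gamma_r,2n+1)$, which a direct tracking identifies with the cycle $(2n+1,\gamma_r,\dots,\gamma_1)$, i.e.\ precisely the hub cycle $(2n+1,\alpha_{i_k},\dots,\alpha_{i_1})$ through the non-inner one-element blocks $i_1\leq\cdots\leq i_k$ (and the identity, fixing $2n+1$, when $r=0$). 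I expect the main obstacle to be the bookkeeping of this last paragraph: checking that peeling an innermost two-element block neither disturbs the nesting of the surviving blocks nor promotes any surviving one-element block to a new direct inner block, so that the cycle data of $\pi'$ is literally that of $\pi$ with $b$'s cycle removed.
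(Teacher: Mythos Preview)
Your argument is correct, and the bookkeeping worry you flag is harmless: since you remove a contiguous interval $[P,Q]$ whose only two-element block is $b$ itself, no surviving one-element block changes its enclosing two-element block, so the direct-inner data of $\pi'$ really is that of $\pi$ with $b$'s record deleted.

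Your route is genuinely different from the paper's. The paper does not induct; it tracks orbits directly. It follows $2n+1$ through the composition $a_1\cdots a_m$ (rightmost first), observing that whenever the current rightmost position lies in a two-element block $\{k,m\}$ the element returns to the slot of $2n+1$ at step $k$ without being touched in between, while a one-element block $\{m\}$ parks it permanently at $\alpha_m$; iterating gives the hub cycle through the non-inner singletons. For a two-element block $\{p,r\}$ the paper then tracks $\alpha_r$, which first lands at the slot of $2n+1$ and then obeys the same rule restricted to $(p,r)$, producing the corresponding cycle. What your inductive peeling buys is a clean factorisation $\sigma=c\cdot\sigma'$ with explicitly disjoint supports, so the cycle structure is assembled rather than read off; the paper's orbit-chase is shorter and more hands-on but leaves the disjointness of the cycles implicit. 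Both arguments rest on the same two local computations you isolate: the contiguous chunk $a_P\cdots a_Q$ equals $(\alpha_b,\beta_l,\dots,\beta_1)$ and fixes $2n+1$, and a product of distinct star transpositions is a single full cycle on their labels together with $2n+1$.
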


It is strongly recommended to read the following example and do the included calculations before or instead of reading the proof.

\begin{example}
Suppose we have a tuple $(a_1,\dots,a_9)$ with $a_2=a_9$ and $a_4=a_7$ and no other equalities.
The corresponding partition is on the picture:

\includegraphics{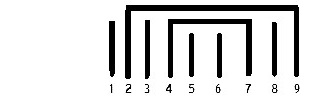}

The product of the tuple:
$$
a_1 \cdots a_9= $$
$$
=(\alpha_1, 2n+1)(\alpha_9, 2n+1)(\alpha_3, 2n+1)(\alpha_7, 2n+1)$$
$$
(\alpha_5, 2n+1)(\alpha_6, 2n+1)(\alpha_7, 2n+1)(\alpha_8, 2n+1)(\alpha_9, 2n+1)
$$
has the following cycle decomposition:
$$(2n+1, \alpha_1)(\alpha_9, \alpha_8,\alpha_3)(\alpha_7,\alpha_6,\alpha_5).$$

\end{example}

\begin{proof}[Proof of Lemma 6]

%NOWE I STARE
Let us first see what happens to the element $2n+1$ under the action of the product $(a_1\cdots a_m)$.
The first transposition $a_m$ sends it to the place of the element $\alpha_m$.
If $m$ is in a one-element block of $\pi$, then $2n+1$ will stay at the place of $\alpha_m$.
If $m$ is in a two-element block $\{k,m\}$, then $2n+1$ will return to its original place under the action of
$a_k$. Any $a_l$ with $k< l < m$ will not affect $2n+1$.

%STARE
%So the element $2n+1$ goes to the place of an element $\alpha_i$ with $\{i\}$ being the first non-inner one-element block from 
%the right. % but not an inner block of any two-element block of $\pi$. 
%If such an $i$ does not exist, then $2n+1$ is a fix-point $a_1\cdots a_m$. 

%NOWE
So the element $2n+1$ goes to the place of an element $\alpha_i$ with $\{i\}$ being the first non-inner one-element block from 
the right. % but not an inner block of any two-element block of $\pi$. 
If such an $i$ does not exist, then $2n+1$ stays at its original place.

%NOWE
Note that the same is true for any element that is at the place of the element $2n+1$. 
So if $2n+1$ goes to the place of some $\alpha_i$, then $\alpha_i$ goes to the place of the first
$\alpha_j$ where $\{j\}$ is the first one element block from the right if we restrict the partition $\pi$ to the set
$\{1,2,\dots,i-1\}$.

%NOWE
Using the above-described method we can read the cycle consisting $2n+1$ out of the partition $\pi$.
First we write $2n+1$ and then we choose all non-inner one-element blocks $\{s\}$ of $\pi$ from the right to the left
and write the corresponding $\alpha_s$ after $2n+1$ from the left to the right just like in Example 1.

%STARE
%Assume that $2n+1$ goes to the place of $\alpha_i$. What then happens to $\alpha_i$? It first moves to the place of
%$2n+1$ and then under action of $a_{i-1}$ to the place of $\alpha_{i-1}$. Just like before, if $i-1$ is in a one-element
%block, then $\alpha_i$ stays in the place of $\alpha_{i-1}$ and if $i-1$ is in a two-element block $\{j,i-1\}$, then $a_j$ will
%send $\alpha_i$ back to the place of $2n+1$. 
%This will be repeated until first (non-inner) one-element block from the right or, if such a block does not exist,
%$\alpha_i$ will go to the place of $2n+1$. 

%NOWE I STARE
Let us now see how to construct the other cycles of $a_1\cdots a_m$.
Choose some two-element block $\{p,r\}$ of $\pi$. Note that the elements $\alpha_p, \alpha_{p+1}, \dots, \alpha_r$ 
cannot appear in transpositions $a_1,\dots,a_{p-1},a_{r+1},\dots,a_m$ as $\pi$ is non-crossing.
Consider $\alpha_r$. Under the action of the product $a_1\cdots a_m$ it is first moved by $a_r$ and it goes to the place of the
element $2n+1$. 

%NOWE
Now we can apply the same argument as before with a difference that - as we are inside the block $\{p,r\}$ - instead of looking at 
non-inner one-element blocks we need to look at direct inner one-element blocks of $\{p,r\}$.
So to read the cycle corresponding to the block $\{p,r\}$ out of the partition $\pi$ we need to choose all direct inner one-element
blocks $\{s\}$ of $\{p,r\}$ from the right to the left and write the corresponding $\alpha_s$ after $\alpha_r$ from the left to the right.

%STARE
%Then $a_{r-1}$ sends it to the place of $\alpha_{r-1}$. If $r-1$ is in a one-element block, then
%$\alpha_r$ stays in the place of $\alpha_{r-1}$. If $r-1$ is in a two-element block $\{q,r-1\}$, then $a_q$ sends $\alpha_r$
%back to the place of $2n+1$. Just like before, $\alpha_r$ goes to the place of $\alpha_t$ with $\{t\}$ being the first direct 
%inner one-element block of  $\{q,r-1\}$ from the right.
%Note again that every $a_s$ with $q<s<r-1$ does not affect $\alpha_r$, that is why we only look at \textbf{direct} inner one-element blocks.
%Repeating the same argument for all $\alpha$ finishes the proof. 
\end{proof}

\begin{defi} 
Let $(n)_{k}= n(n-1)\cdots(n-k+1)$ denote the product of descending integers.
\end{defi}

\begin{lem} \label{lemat4}
Suppose we have a tuple $A_1,\dots,A_m$
of Jucys-Murphy elements i.e.~every $A_i$ is equal either to $X_n$ or to $Y_n$ and a partition $\pi$ respecting the corresponding coloring. Then the number of tuples $a_1,\dots,a_m$ such that  $\pi \approx a_1,\dots,a_m \sim A_1,\dots,A_m$ is equal to $(n)_{k}(n)_{l}$ where $k$ is the number of blocks of $\pi$ with color $X$ and $l$ is the number of blocks of $\pi$ with color $Y$.
\end{lem}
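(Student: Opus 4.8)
The plan is to turn the two conditions $(a_1,\dots,a_m)\sim(A_1,\dots,A_m)$ and $\pi\approx(a_1,\dots,a_m)$ into an elementary counting problem: how many ways can one attach a Jucys-Murphy transposition to each block of $\pi$ so that all the compatibility constraints hold? The answer will come out as a product of two falling factorials, one for each color.

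First I would unpack the relation $\sim$. If $A_i=X_n$ then the only summands available for $a_i$ are the transpositions $(\alpha,2n+1)$ with $\alpha\in\{1,\dots,n\}$, and if $A_i=Y_n$ then $\alpha$ must lie in $\{n+1,\dots,2n\}$. So there are exactly $n$ admissible transpositions of color $X$ and $n$ of color $Y$, and a transposition is determined by the single index $\alpha$. Next I would unpack $\approx$: it says that $i\mapsto a_i$ is constant on each block of $\pi$ and takes \emph{distinct} values on distinct blocks. Since $\pi$ respects the coloring, every block $b$ carries a well-defined color, and the constraint from $\sim$ forces the transposition assigned to $b$ to have that color. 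Choosing an admissible tuple is therefore the same as choosing, for each block of $\pi$, one transposition of the correct color, subject to the blocks receiving pairwise distinct transpositions.

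Then I would separate the count by color. The crucial observation is that an $X$-transposition has its moving index in $\{1,\dots,n\}$ while a $Y$-transposition has it in $\{n+1,\dots,2n\}$, so an $X$-transposition can never coincide with a $Y$-transposition. Consequently the distinctness requirement only couples blocks of the \emph{same} color, and the choices for the $X$-blocks are independent of those for the $Y$-blocks. Assigning pairwise distinct $X$-transpositions to the $k$ blocks of color $X$ is exactly an injection from a $k$-element set into the $n$-element pool of $X$-transpositions, of which there are $n(n-1)\cdots(n-k+1)=(n)_k$; likewise there are $(n)_l$ choices for the $l$ blocks of color $Y$. Multiplying gives $(n)_k(n)_l$, as claimed.

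The argument is essentially careful bookkeeping, so the only point that genuinely needs attention is the ``only if'' half of $\approx$. It is this half that demands distinct blocks receive distinct transpositions and hence produces the falling factorials $(n)_k,(n)_l$ rather than the naive powers $n^k,n^l$; I would be explicit that cross-color coincidences are automatically impossible, which is what lets the two injective counts decouple into a clean product.
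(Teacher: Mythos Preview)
Your argument is correct and is exactly the natural bookkeeping the paper has in mind; in fact the paper states this lemma without proof, so there is nothing to compare against beyond noting that your unpacking of $\sim$ and $\approx$ into an injective choice of transpositions per block, decoupled by color, is the intended reasoning.
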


%From [ref] we know the asymptotic behaviour of $\varphi_{\pi}(A_1 A_2 \dots A_m)$

The following Lemma is a reformulation of Theorem 1.3 from [Bia98].

\begin{lem} \label{lemat5}
%The non-commutative probability space $(\mathcal{A},\varphi)$ we work with depend on $n$. Let $n$ go to infinity.
Let balanced Young diagrams $\lambda_1$ and $\lambda_2$ corresponding to $\rho_1$ and $\rho_2$ in the definition of $\varphi$ have, in the limit when
$n$ goes to infinity, some limit shapes
$\Lambda_1$ and $\Lambda_2$.
Let $\sigma$ be a product of some tuple
of Jucys-Murphy transpositions satisfying $\pi \approx (a_1,\dots,a_m) \sim (A_1,\dots,A_m)$ and assume that $\sigma \in S_n \times S_n \times \{e\}$.
Let $\sigma_1, \sigma_2$
be such that $\operatorname{supp}(\sigma_1)\subset\{1,\dots,n\}$, $\operatorname{supp}(\sigma_2)\subset\{n+1,\dots,2n\}$ and $\sigma=\sigma_1 \sigma_2$
where $\operatorname{supp}$ denotes the support of a permutation.

Then $$n^{\frac{|\sigma|}{2}} \varphi(a_1\dots a_m) \rightarrow \prod_{c|\sigma_1}C_{|c|+2}^{\mu_{\Lambda_1}}\prod_{c|\sigma_2}C_{|c|+2}^{\mu_{\Lambda_2}}$$
where $C^{\mu_\Lambda}_k$ denotes the $k$-th free cumulant of $\mu_{\Lambda}$.  
\end{lem}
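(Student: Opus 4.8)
The plan is to deduce the statement from Biane's asymptotic character formula (the cited Theorem~1.3 of [Bia98]) after a short reduction, so that the only genuine work on our side is a factorization of $\varphi$ and a power-of-$n$ bookkeeping. First I would use the defining property of $\varphi$. Since by assumption $\sigma=a_1\cdots a_m\in S_n\times S_n\times\{e\}$, it splits as $\sigma=\sigma_1\sigma_2$ with $\operatorname{supp}(\sigma_1)\subset\{1,\dots,n\}$ and $\operatorname{supp}(\sigma_2)\subset\{n+1,\dots,2n\}$, and under the identification of the subgroup we have $(\rho_1\varotimes\rho_2\varotimes\operatorname{id})(\sigma)=\rho_1(\sigma_1)\varotimes\rho_2(\sigma_2)\varotimes\operatorname{id}$. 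Because the normalized trace of a tensor product factorizes and $\operatorname{tr}(\operatorname{id})=1$, this yields
\[
\varphi(a_1\cdots a_m)=\operatorname{tr}(\rho_1(\sigma_1))\,\operatorname{tr}(\rho_2(\sigma_2)).
\]
As $\sigma_1$ and $\sigma_2$ have disjoint supports their lengths add, $|\sigma|=|\sigma_1|+|\sigma_2|$, and $|\sigma_i|=\sum_{c|\sigma_i}|c|$, the sum running over all cycles $c$ of $\sigma_i$ (fixed points contributing $0$) and $|c|$ denoting the length of $c$ as a permutation, one less than the number of points it moves.

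Next I would feed each factor into Biane's theorem. For a balanced diagram with limit shape $\Lambda_i$ the normalized character asymptotically factorizes over the cycles of its argument, and a single cycle moving $j$ points obeys
\[
n^{(j-1)/2}\,\operatorname{tr}\big(\rho_i(\text{$j$-cycle})\big)\longrightarrow C^{\mu_{\Lambda_i}}_{j+1}.
\]
Writing $|c|=j-1$ for such a cycle turns the right-hand side into $C^{\mu_{\Lambda_i}}_{|c|+2}$ and the prefactor into $n^{|c|/2}$; a fixed point ($j=1$) contributes the factor $C_2^{\mu_{\Lambda_i}}=1$ together with the trivial power $n^0$, so it may be harmlessly included or omitted.

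Finally I would multiply the two asymptotics and collect the powers of $n$:
\[
n^{|\sigma|/2}\varphi(a_1\cdots a_m)=n^{|\sigma_1|/2}\operatorname{tr}(\rho_1(\sigma_1))\cdot n^{|\sigma_2|/2}\operatorname{tr}(\rho_2(\sigma_2))\longrightarrow\prod_{c|\sigma_1}C^{\mu_{\Lambda_1}}_{|c|+2}\prod_{c|\sigma_2}C^{\mu_{\Lambda_2}}_{|c|+2},
\]
where I used $|\sigma_1|/2+|\sigma_2|/2=|\sigma|/2$ and that, each factor being bounded and convergent, the limit passes through the finite product. This is exactly the claimed formula.

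The deep content imported from outside is Biane's Theorem~1.3: both the \emph{approximate factorization} of normalized characters over the cycles of the permutation and the identification of the single-cycle leading term with a free cumulant of the transition measure. This is the main obstacle, and it is precisely the part we do not reprove. The Fact quoted earlier guarantees only that each $\operatorname{tr}(\rho_i(\sigma_i))$ is of order $n^{-|\sigma_i|/2}$, so that the rescaling by $n^{|\sigma|/2}$ produces a finite limit rather than $0$ or $\infty$; it does not by itself pin down the value. What genuinely remains for us is the bookkeeping above: the tensor factorization of $\varphi$, the additivity $|\sigma|=|\sigma_1|+|\sigma_2|$, and the matching of the cycle length $|c|$ with the cumulant index $|c|+2$.
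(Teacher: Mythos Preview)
Your proposal is correct and matches the paper's approach: the paper does not give a proof of this lemma at all, stating only that it is ``a reformulation of Theorem~1.3 from [Bia98].'' You have spelled out precisely that reformulation---the tensor factorization of $\varphi$, the additivity $|\sigma|=|\sigma_1|+|\sigma_2|$, and the application of Biane's asymptotic character formula to each factor---which is exactly the intended reduction.
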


\section{The result}

Now we are ready to compute the mixed moment of a tuple $(A_1, A_2, \dots, A_m)$:
\[
\varphi(A_1 A_2 \cdots A_m)=
n^{-\frac{m}{2}}\sum_{(a_1, \dots, a_m) \sim (A_1, \dots A_m)}
\varphi(a_1 \cdots a_m).
\]

Every tuple $(a_1,\dots,a_m)$ of transpositions defines a partition $\pi \approx (a_1,\dots,a_m)$ so we can write the above sum grouping the summands according to partitions:

\begin{equation} \label{eq:wz}
\varphi(A_1 A_2 \cdots A_m)=n^{-\frac{m}{2}} \sum_{\pi\in P(m)}
\left(
\sum_{ \pi \approx (a_1,\dots,a_m) \sim (A_1,\dots,A_m)}
\varphi(a_1 \cdots a_m)
\right),
\end{equation}
where the second sum is taken over all tuples $(a_1,\dots ,a_m)$ such that 
$\pi \approx (a_1,\dots,a_m) $ and $(a_1,\dots,a_m) \sim (A_1,\dots,A_m)$.

From Lemmas 1 and 2
%$\ref{lemat1}$ 
we know that the value of $\varphi$ on each tuple
$(a_1,\dots,a_m)$ corresponding to a fixed partition $\pi$ is the same.
We can thus denote this value by $\varphi_{\pi}(A_1, A_2, \dots, A_m)$ and write
$$
\varphi(A_1 A_2 \cdots A_m)= $$
$$
=\sum_{\pi\in P(m)}n^{-\frac{m}{2}}
\#\{(a_1,\dots a_m): \pi \approx (a_1,\dots,a_m) \sim (A_1,\dots,A_m)\}\ \varphi_{\pi}(A_1, A_2, \dots, A_m).
$$
Then from Lemma 7 %$\ref{lemat4}$ 
we get

\[
\varphi(A_1 A_2 \cdots A_m)= \sum_{\pi\in P(m)}n^{-\frac{m}{2}}\ 
(n)_k\  (n)_l\  \varphi_{\pi}(A_1, A_2, \dots, A_m).
\]

From Fact 1 we know that
for a fixed $\pi$ the absolute value of a summand is bounded :

\[
\big{|}
n^{-\frac{m}{2}}\ 
(n)_k \ (n)_l \ \varphi_{\pi}(A_1, A_2, \dots, A_m)\big{|}
\leq n^{-\frac{m}{2}}n^{|\pi|} C n^{-\frac{|\sigma|}{2}}=
C n^{|\pi|-\frac{|\sigma|}{2}-\frac{m}{2}},
\]
%\[
%|n^{-\frac{m}{2}}
%\#\{a_1,\dots a_m: \pi \approx a_1,\dots,a_m \sim A_1,\dots,A_m\}\varphi_{\pi}(A_1 A_2 \dots A_m)|
%\leq n^{-\frac{m}{2}}n^{|\pi|} C n^{-\frac{|\sigma|}{2}}=
%C n^{|\pi|-\frac{|\sigma|}{2}-\frac{m}{2}}
%\]
where $\sigma$ is a product of any tuple which satisfies  $\pi \approx (a_1,\dots,a_m) \sim (A_1,\dots,A_m)$ (the value $|\sigma|$ does not depend on the choice of the tuple).
From Lemma 5, the only partitions the contribution of which does not asymptoticaly vanish are partitions from $NC_{1,2}(m)$ and
from Lemma 6 we know how to read permutations out of partitions. The functional $\varphi$ is equal to zero on permutations which do not belong to $S_n \times S_n \times \{e\}$, so in order to get some nonzero
value, every two-element block must have the same color as all its direct inner one-element blocks.
This just means that $F(\pi)$ where $F$ is a function described in section $0.1$ must respect the coloring given by $A_1,\dots,A_m$ so it is better to sum over $F(\pi)$ instead of $\pi$.
We can thus write 
\[
\varphi(A_1 A_2 \cdots A_m)= \sum_{F(\pi)\in NC^{(p)}_{>1}(m)}n^{-\frac{m}{2}}\ 
(n)_{k}\ (n)_{l}\ \varphi_{\pi}(A_1, A_2, \dots, A_m) + o(1).
\]

From Lemma 8 we have:

\[
\varphi(A_1 A_2 \cdots A_m)=
\]
\\
\[
=\sum_{F(\pi)\in NC^{(p)}_{>1}(m)}\underbrace{(n)_{k}(n)_{l}n^{-\frac{m}{2}}n^{-\frac{|\sigma|}{2}}}_{\text{this tends to }1} 
\underbrace{n^{\frac{|\sigma|}{2}}\varphi_{\pi}(A_1, A_2, \dots, A_m)}_{\text{this tends to } \prod_{c|\sigma_1}C_{|c|+2}^{\mu_{\Lambda_1}}\prod_{c|\sigma_2}C_{|c|+2}^{\mu_{\Lambda_2}}}
\rightarrow 
\]
\[
\rightarrow \sum_{F(\pi)\in NC^{(p)}_{>1}(m)} \prod_{c|\sigma_1}C_{|c|+2}^{\mu_{\Lambda_1}}\prod_{c|\sigma_2}C_{|c|+2}^{\mu_{\Lambda_2}}.
\]

From Fact 2 we know that $\prod_{c|\sigma_1}C_{|c|+2}^{\mu_{\Lambda_1}}\prod_{c|\sigma_2}C_{|c|+2}^{\mu_{\Lambda_2}}$ are
free cumulants of a tuple of some free non-commutative random variables with distributions $\mu_{\Lambda_1}$ and $\mu_{\Lambda_2}$.
By Definition 4. this means that $A_1,\dots,A_m$ are asymptotically free.

%Let us denote $n^{-\frac{m}{2}}
%(n)_{k}(n)_{l}\varphi_{\pi}(A_1, A_2, \dots, A_m)$ =: $C_{n}^{F(\pi)}(A_1, A_2, \dots, A_m)$.

%We hope that $C_{n}^{F(\pi)}(A_1, A_2, \dots, A_m)$ are free cumulants of $A_1 A_2 \dots A_m$.
%Ale nie wiemy czy to na pewno wolne kumulanty (potrzebna multiplikatywność i ?cos jeszcze?).

%From Lemma $\ref{lemat5}$ we have:(JESZCZE STAŁE!!!!)

%To co wychodzi to wolne kumulanty czyli nie trzeba udowadniać żadnej asympt multipl ani nic.
%dowodniliśmy, że $C_{n}^{F(\pi)}(A_1, A_2, \dots, A_m)$ to coś co dąży do wolnych kumulant.
%A to, że mieszane kumulanty znikają mamy już wcześniej powiedziane.

%Inna definicja wolności i inne rozumowanie:

%X i Y są asympt wolne gdy wszystkie mieszane momenty dążą do mieszanych momentów pewnych wolnych zm los.
%W tym wypadku te zm to wolne zm o rozkładach $\mu_{\lambda_1}$ i $\mu_{\lambda_2}$.
%Uwaga: X i Y muszą mieć rozkł graniczne! No ale przecież można a nawet trzeba to założyć bo inaczej nie ma mowy o asympt wolności!

%\newpage

\newpage
\begin{center}

\textbf{REFERENCES}

\end{center}

\medskip

\noindent [Bia95] Philippe Biane. Permutation model for semicircular systems and quantum random walks.
 								  Pacific J. Math. Volume 171, Number 2 (1995), 373-387.

\medskip

\noindent [Bia98] Philippe Biane. Representations of symmetric groups and free probability. 
 								  Adv. Math., 138(1):126–181, 1998.
\medskip

\noindent [Bia01a] Philippe Biane. Approximate factorization and concentration for characters of
                   symmetric groups. Internat. Math. Res. Notices, (4):179–192, 2001. 

\medskip

\noindent [FH91] William Fulton, Joe Harris 'Representation theory:a first course' Springer 1991

\medskip

\noindent [J66] Jucys, Algimantas Adolfas (1966), "On the Young operators of the symmetric group", Lietuvos Fizikos Rinkinys 6: 163–180

\medskip

\noindent [Ker99] S. Kerov. A differential model for the growth of Young diagrams. In Proceedings
of the St. Petersburg Mathematical Society, Vol. IV, volume 188 of Amer.
Math. Soc. Transl. Ser. 2, pages 111–130, Providence, RI, 1999. Amer. Math.
Soc.

\medskip

\noindent [Ker03] S. V. Kerov. Asymptotic representation theory of the symmetric group and its
applications in analysis, volume 219 of Translations of Mathematical Monographs.
AnsmericanMathematical Society, Providence, RI, 2003. Translated from
the Russian manuscript by N. V. Tsilevich, With a foreword by A. Vershik and
comments by G. Olshanski.

\medskip

\noindent [NaiS82] THEORY OF GROUP REPRESENTATIONS. M. A. Naimark, A. I. Stern [translated by Elizabeth and Edwin. Hewitt]. 
					568 pp. Springer-Verlag, New York, 1982. 

\medskip

\noindent [NS99] A. Nica, R. Speicher, Lectures on the Combinatorics of Free Probability Theory, Paris, 1999.

\medskip

\noindent [OV04] Okounkov, Andrei; Vershik, Anatoly (2004), "A New Approach to the Representation Theory of the Symmetric Groups. 2", 
					Zapiski Seminarod POMI (In Russian) v. 307

\medskip

\noindent [Ser77] Jean-Pierre Serre. Linear Representations of Finite Groups. Number 42 in Graduate
Texts in Mathematics. Springer-Verlag, 1977.

\medskip

\noindent [Spe93] R. Speicher, Free convolution and the random sum of matrices, RIMS 29 (1993),
731–744.

\medskip

\noindent [Voi86] Dan Voiculescu. Addition of certain noncommuting random variables. J. Funct.
Anal., 66(3):323–346, 1986.

\medskip

\end{document}